\numberwithin{equation}{section}
\newtheorem{Theorem}{Theorem}[section]
\newtheorem*{Theorem*}{Theorem}
\newtheorem{Corollary}[Theorem]{Corollary}
\newtheorem{Lemma}[Theorem]{Lemma}
\newtheorem{Claim}[Theorem]{Claim}
\newtheorem{Conjecture}[Theorem]{Conjecture}
\newtheorem{Proposition}[Theorem]{Proposition}
 { \theoremstyle{definition}

\newtheorem{Example}[Theorem]{Example}
\newtheorem{Remark}[Theorem]{Remark} }
\newcommand{\rk}{\operatorname{rk}}	
\newcommand{\sym}{\textnormal{Sym}}
\begin{document}

\allowdisplaybreaks

\renewcommand{\thefootnote}{}

\renewcommand{\PaperNumber}{061}

\FirstPageHeading

\ShortArticleName{Big and Nef Tautological Vector Bundles over the Hilbert Scheme of Points}

\ArticleName{Big and Nef Tautological Vector Bundles\\ over the Hilbert Scheme of Points\footnote{This paper is a~contribution to the Special Issue on Enumerative and Gauge-Theoretic Invariants in honor of Lothar G\"ottsche on the occasion of his 60th birthday. The~full collection is available at \href{https://www.emis.de/journals/SIGMA/Gottsche.html}{https://www.emis.de/journals/SIGMA/Gottsche.html}}}

\Author{Dragos OPREA}

\AuthorNameForHeading{D.~Oprea}

\Address{Department of Mathematics, University of California San Diego,\\ 9500 Gilman Drive, La Jolla, CA, USA}
\Email{\href{mailto:doprea@math.ucsd.edu}{doprea@math.ucsd.edu}}
\URLaddress{\url{http://math.ucsd.edu/~doprea/}}

\ArticleDates{Received January 31, 2022, in final form July 31, 2022; Published online August 12, 2022}

\Abstract{We study tautological vector bundles over the Hilbert scheme of points on surfaces. For each $K$-trivial surface, we write down a simple criterion ensuring that the tautological bundles are big and nef, and illustrate it by examples. In the $K3$ case, we extend recent constructions and results of Bini, Boissi\`ere and Flamini from the Hilbert scheme of~2 and~3 points to an arbitrary number of points. Among the $K$-trivial surfaces, the case of Enriques surfaces is the most involved. Our techniques apply to other smooth projective surfaces, including blowups of $K3$s and minimal surfaces of general type, as well as to the punctual Quot schemes of curves.}

\Keywords{Hilbert scheme; Quot scheme; tautological bundles}

\Classification{14C05; 14D20; 14C17}

\renewcommand{\thefootnote}{\arabic{footnote}}
\setcounter{footnote}{0} \vspace{-1ex}

\section{Introduction}Let $X$ be a smooth projective surface, and let $X^{[k]}$ denote the Hilbert scheme of $k$ points on $X$. Each vector bundle $F\to X$ of rank $r$ yields a tautological vector bundle $F^{[k]}\to X^{[k]}$ of rank $rk$ given by \vspace{-.5ex}
\begin{gather*}
F^{[k]}=p_{\star} (q^{\star} F\otimes \mathcal O_{\mathcal Z}).
\end{gather*}
Here, $p$, $q$ are the natural projections from $X^{[k]}\times X$, and $\mathcal Z\subset X^{[k]}\times X$ denotes the universal subscheme.

The literature surrounding the geometry of the tautological bundles is vast. Likewise, many notions of positivity for vector bundles have been studied in algebraic and complex differential geometry. Merging these two themes, it is natural to investigate the positivity properties of the tautological bundles.

In this note, we address the question whether the bundles $F^{[k]}\to X^{[k]}$ are big and nef.
To our knowledge, for $K3$s, this has been considered for the first time in the recent article \cite{BBF}, alongside the stability and bigness of twists of the tangent bundle of $X^{[k]}$. Specifically, if $X$ a $K3$ surface of Picard rank $1$, and the number of points is $k=2, 3$, it is shown in \cite{BBF} that $F^{[k]}$ is big and nef when $F$ is either\vspace{-.5ex}
\begin{itemize}\itemsep=-1pt
\item [$(a)$] a positive line bundle,
\item [$(b)$] a twist of a Lazarsfeld--Mukai bundle (for suitable numerics),
\item [$(c)$] a twist of an Ulrich bundle.
\end{itemize}

Recall that a vector bundle $V$ over a scheme $Y$ is said to be big and nef if the line bundle $\mathcal O_{\mathbb P(V)}(1)\to \mathbb P(V)$ is big and nef, where $\mathbb P(V)$ denotes the projective bundle of one dimensional quotients. A discussion of big and nef vector bundles can be found in \cite[Chapters~6 and~7]{L-II}.
A useful well-known characterization occurs when $V\to Y$ is globally generated. In this case, if the top Segre class
\begin{equation}
\label{pos}(-1)^{\dim Y} \int_{Y} s(V)>0
\end{equation}
it follows that $V\to Y$ is big and nef.

\subsection{Results} The original motivation for our work was provided by the recent results of \cite{BBF}, which we extend in several directions.
\begin{itemize}\itemsep=0pt
\item [$(i)$] For $K3$s, we allow for arbitrary number of points $k$, and derive a condition that ensures $F^{[k]}$ is big and nef, see Theorem~\ref{tt}. We apply this theorem to obtain analogues of examples $(a)$--$(c)$ above for any $k$.
\item [$(ii)$] We allow for arbitrary $K$-trivial surfaces. The case of Enriques surfaces is the most difficult, and we only have results in odd rank, see Theorem~\ref{tt2}, as well as a conjectural bound in general.
\item [$(iii)$] We consider other smooth projective surfaces, including blowups of $K3$s in Theorem~\ref{blok}, and minimal surfaces of general type, in rank~$1$, in Theorem~\ref{gtyp}. The latter theorem is the most involved result we prove here, requiring a more detailed analysis than for other geometries.
\item [$(iv)$] We show how the same ideas yield similar results over the punctual Quot schemes of curves, see Theorem~\ref{quotpun}.
\end{itemize}

Compared to \cite{BBF}, the new ingredient is the closed form calculation of the Segre integrals in \cite{MOP2, MOP1, MOP, OP2021}. The formulas are explicit, and the goal here is to show how to apply them to derive geometric positivity results. This is not always immediate, and the arguments require several different ideas. We thus believe it is worthwhile to record the outcome. We also illustrate our calculations by a few geometric examples.

\subsection{Applications} By \cite[Proposition 1.4]{Y}, taking determinants of big and nef vector bundles yields big and nef divisors. There are several results in the literature concerning the positivity of the determinants $\det F^{[k]}$, see for instance \cite{BS, CG} with regards to very ampleness when $F$ has rank $1$, for arbitrary surfaces. In general, nef divisors over the Hilbert scheme of $K3$s were studied in \cite[Section~10]{BM}. Over other surfaces, related results can be found in \cite{ABCH, BC, BHL+, Ko, LQZ, MM, N, QT, YY}, among others. The nef cones of divisors of the punctual Quot schemes of curves of genus $0$ and $1$ were determined in \cite{GS, St}.

Whenever $F^{[k]}\to X^{[k]}$ is a big and nef bundle, for every $m_1, \dots, m_h\geq 0$, Demailly vanishing gives\footnote{For an ample vector bundle $V\to Y$, Demailly's vanishing theorem \cite{De} states
\begin{gather*}
H^i\big(Y, \omega_Y\otimes \sym^{m_1}V\otimes \dots \otimes \sym^{m_h}V\otimes (\det V)^{\otimes h}\big)=0, \qquad m_1, \dots, m_h\geq 0, \qquad h>0, \qquad i> 0.
\end{gather*}
This is derived in \cite[Theorem 7.3.14]{L-II} from Griffiths vanishing. The same argument applies to big and nef vector bundles; \cite[Example 7.3.3]{L-II} notes that Griffiths vanishing holds in this context.}
\begin{gather*}
H^i\bigl(X^{[k]}, \omega_{X^{[k]}} \otimes \sym^{m_1}F^{[k]}\otimes \dots \otimes \sym^{m_h} F^{[k]}\otimes \bigl(\det F^{[k]}\bigr)^{\otimes h}\bigr)=0, \qquad i>0.
\end{gather*}
For instance, using Theorem \ref{tt} or Corollary \ref{c4}, if $L\to X$ is an ample line bundle on a $K3$ surface $X$ of Picard rank $1$, with $\chi(L)\geq 3k$, we have
\begin{gather*}
H^i\bigl(X^{[k]}, \sym^{m_1}L^{[k]}\otimes \dots \otimes \sym^{m_h} L^{[k]}\otimes \bigl(\det L^{[k]}\bigr)^{\otimes h}\bigr)=0, \qquad i>0.
\end{gather*}
Analogous statements hold in all geometric situations covered by items $(i)$--$(iv)$ above. Cohomology with values in the tautological bundles and their representations was studied for instance in \cite{A, Da, EGL, Sc, Z}, but the vanishings results above are new.

To further understand the cohomology, the next step would be to compute the holomorphic Euler characteristics, that is to find the series
 \begin{gather*}
 \mathsf Z_{X, F}^{m_1, \dots, m_h}=\sum_{k=0}^{\infty} q^k \chi \bigl(X^{[k]}, \omega_{X^{[k]}} \otimes \sym^{m_1}F^{[k]}\otimes \dots \otimes \sym^{m_h} F^{[k]}\otimes \bigl(\det F^{[k]}\bigr)^{\otimes h}\bigr).
 \end{gather*}
This is a difficult but interesting question. We expect that the answer is given by {\it algebraic} functions. The simplest case $m_1=\dots=m_h=0$ corresponds to the Verlinde series determined in \cite[Theorem~5.3]{EGL} for $K$-trivial surfaces, and conjectured for small $h$ for all surfaces in \cite[Section 1.6]{MOP}. (After the writing was completed, we learned about the recent announcement~\cite{G} regarding expressions for the Verlinde series for all surfaces and all values of~$h$.)

Similar vanishing statements can be made over the punctual Quot schemes of curves.

\subsection{Plan of the paper} The case of $K3$ surfaces is the simplest and is discussed first, see Section~\ref{seck3}. To illustrate the results of Section~\ref{seck3}, in Section~\ref{secex} we extend the constructions in~\cite{BBF} to arbitrary number of points. Other $K$-trivial surfaces, and in particular Enriques surfaces, are considered in Section~\ref{secktriv}. Other geometries, specifically $K3$ blowups and minimal surfaces of general type, are studied in Section~\ref{secother}. Sections~\ref{secktriv} and~\ref{secother} are the most involved. Finally, Section \ref{seccur} concerns the punctual Quot scheme of curves.

\section[K3 surfaces]{$\boldsymbol{K3}$ surfaces} \label{seck3} Let $X$ be a smooth projective surface. The bundle $F^{[k]}\to X^{[k]}$ is globally generated, and therefore nef, provided $F\to X$ is $(k-1)$-very ample. By definition, $(k-1)$-very ampleness is the requirement that the natural map
\begin{gather*}
H^0(X, F)\to H^0(X, F\otimes \mathcal O_{\zeta})
\end{gather*}
is surjective for all zero-dimensional subschemes $\zeta$ of $X$ of length $k$. Thus, via \eqref{pos}, if $F$ is $(k-1)$-very ample and
\begin{gather*}
\int_{X^{[k]}} s\big(F^{[k]}\big)>0,
\end{gather*}
then $F^{[k]}\to X^{[k]}$ is big and nef. This is explained for instance in \cite[Propositions~2.4 and~4.5]{BBF}.

Let $(X, H)$ be a polarized $K3$ surface, and let $r= \operatorname{rank}F$. Central for our argument is the following structural expression for the Segre integrals established in \cite{MOP}: \begin{equation}\label{eq1}
\sum_{k=0}^{\infty} z^k \int_{X^{[k]}} s \big(F^{[k]}\big) = A_0(z)^{c_2(F)} A_1(z)^{c_1^2(F)} A_2(z).
\end{equation}
{\samepage
The series $A_0$, $A_1$, and $A_2$ are given by explicit algebraic functions
\begin{gather}
A_0(z)= (1+(1+r)t)^{-r-1} (1 + (2+r) t)^{r}, \nonumber
\\
A_1 (z)= (1+(1+r)t)^{\frac{r}{2}} (1 + (2+r) t)^{-\frac{r-1}{2}},\nonumber
\\
A_2 (z)= (1+(1+r)t)^{r^2+2r} (1 + (2+r)t)^{-r^2+1} (1 + (1+r)(2+r)t)^{-1},\label{aaa}
\end{gather}
for the change of variables
\begin{equation*}
z = t (1+(1+r)t)^{1+r}.
%\label{changez}
\end{equation*}}
We point out that in \cite{MOP}, $r$ stands for $\operatorname{rank}(F) + 1$, while for us $r=\operatorname{rank}(F)$; the above expressions account for the different notational conventions. Related formulas over the moduli space of higher rank sheaves were proposed in \cite{GK} and were recently proven in \cite{Ob}.

For each rank $r$ vector bundle $F\to X$, we write $v=v(F)=\operatorname{ch} (F) \sqrt{\operatorname{Td}(X)}$ for its Mukai vector, and we set
\begin{gather*}
\chi=\chi(F),\qquad \delta=1+\frac{1}{2}\langle v, v\rangle.
\end{gather*}
Here $\langle\,,\, \rangle$ is the Mukai pairing given by
\begin{gather*}
\langle v, v\rangle =\int_{X} v_2^2-2v_0v_4 \qquad
\text{for vectors}\quad v=(v_0, v_2, v_4)\in H^{0}(X)\oplus H^2(X)\oplus H^4(X).
\end{gather*}
Moreover, $\delta$ equals the expected dimension of the moduli space of sheaves of type $v$.
We show
\begin{Theorem}\label{tt}
Assume $\chi \geq (r+2) k$ and $\delta \geq 0$.
If $F$ is $(k-1)$-very ample, then $F^{[k]}$ is a big and nef vector bundle over $X^{[k]}$.
\end{Theorem}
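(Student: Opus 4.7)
The plan is to split the assertion into its nefness and bigness halves. First, the hypothesis that $F$ is $(k-1)$-very ample makes the evaluation map $H^0(X,F) \to H^0(X, F|_\zeta)$ surjective for every length-$k$ subscheme $\zeta \subset X$, hence $F^{[k]}$ is globally generated and therefore nef. Because $\dim X^{[k]} = 2k$ is even, the positivity criterion \eqref{pos} reduces bigness to the single inequality
\[\int_{X^{[k]}} s\bigl(F^{[k]}\bigr) > 0,\]
and the remainder of the argument becomes a sign analysis of the generating function in \eqref{eq1}.

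The main step, and what I expect to be the real obstacle, is to repackage the right-hand side of \eqref{eq1} in terms of $\chi$ and $\delta$ rather than $c_1^2(F)$ and $c_2(F)$. On a $K3$, Riemann--Roch and the definition of the Mukai pairing give $\chi = c_1^2/2 - c_2 + 2r$ and $\delta = 1 + (1-r)c_1^2/2 + rc_2 - r^2$, which inverts to express $c_1^2$ and $c_2$ as affine functions of $\chi$, $\delta$, $r$. Substituting into \eqref{aaa} and patiently collecting the exponents of $1+(1+r)t$ and $1+(2+r)t$, I expect the cancellations to produce the clean form
\[\sum_{k \geq 0} z^k \int_{X^{[k]}} s\bigl(F^{[k]}\bigr) = \frac{(1+(1+r)t)^{\chi-\delta+1}(1+(2+r)t)^{\delta}}{1+(1+r)(2+r)t}, \qquad z = t(1+(1+r)t)^{1+r}.\]
The factor $1+(1+r)(2+r)t$ in the denominator is essential for what follows, since $dz/dt = (1+(1+r)t)^r\bigl[1+(1+r)(2+r)t\bigr]$ and this spurious factor will cancel.

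With the series in this normal form, the coefficient $[z^k]$ can be extracted via Lagrange inversion, written as a residue in $t$; after the cancellation noted above, the result is the compact expression
\[\int_{X^{[k]}} s\bigl(F^{[k]}\bigr) = [t^k]\bigl(1+(1+r)t\bigr)^{\chi-\delta-(1+r)k}\bigl(1+(2+r)t\bigr)^{\delta}.\]
Finally, writing $1+(2+r)t = \bigl(1+(1+r)t\bigr) + t$ and expanding the $\delta$-th power binomially rewrites this coefficient as
\[\sum_{j=0}^{\min(\delta,k)} \binom{\delta}{j}\binom{\chi-(1+r)k-j}{k-j}(1+r)^{k-j}.\]
The hypothesis $\chi \geq (r+2)k$ is precisely $\chi - (1+r)k \geq k$, so every upper binomial index dominates the corresponding lower one; combined with $\delta \geq 0$ every summand is nonnegative, and the $j=0$ term $\binom{\chi-(1+r)k}{k}(1+r)^k$ is strictly positive. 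This delivers the required Segre positivity and, together with nefness from the first paragraph, completes the plan.
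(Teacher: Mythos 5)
Your proposal is correct, and its skeleton coincides with the paper's: nefness via global generation from $(k-1)$-very ampleness, reduction of bigness to positivity of the top Segre integral via \eqref{pos}, and then a sign analysis of the coefficient extracted from \eqref{eq1}. The differences are in the middle and at the end. You re-derive the identity
\[
\int_{X^{[k]}} s_{2k}\big(F^{[k]}\big)=\operatorname{Coeff}_{t^k}\bigl[(1+(1+r)t)^{\chi-\delta-(r+1)k}(1+(2+r)t)^{\delta}\bigr]
\]
from \eqref{aaa} by expressing the exponents through $\chi$ and $\delta$ and performing the Lagrange-inversion residue with $\mathrm{d}z/\mathrm{d}t=(1+(1+r)t)^{r}\bigl(1+(1+r)(2+r)t\bigr)$; the paper simply cites this as \eqref{e1} from the residue computation in [MOP, p.~11], and your verification of the exponents ($E_1=\chi-\delta+1$, $E_2=\delta$) is accurate. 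For the positivity itself, the paper substitutes $t=u/(1-(1+r)u)$ and observes that both resulting factors $(1+u)^{\delta}$ and $(1-(1+r)u)^{-\chi+(r+2)k-1}$ have positive coefficients, whereas you write $1+(2+r)t=(1+(1+r)t)+t$ and expand, obtaining the explicit nonnegative sum $\sum_{j}\binom{\delta}{j}\binom{\chi-(1+r)k-j}{k-j}(1+r)^{k-j}$, which is manifestly positive under $\chi\geq(r+2)k$, $\delta\geq 0$ (note $\delta\in\mathbb Z$ since the $K3$ intersection form is even, so the finite binomial expansion is legitimate). Both finishes are valid; yours yields a fully explicit positive formula that specializes to \eqref{f1} when $\delta=0$, while the paper's change-of-variables trick is the one that transports with minimal modification to the abelian/bielliptic and Enriques cases treated later, where extra factors or half-integer exponents appear.
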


\begin{proof}
By the first paragraph of this section, it suffices to show that the integral of the Segre class $s_{2k}\big(F^{[k]}\big)$ is positive. To this end, we use an equivalent form of equation \eqref{eq1}, which can be found in \cite[p.~11]{MOP}.
Specifically, via a residue calculation, it was established there that
\begin{equation}\label{e1}
\int_{X^{[k]}} s_{2k}\big(F^{[k]}\big)=\operatorname{Coeff}_{ t^k} \bigl[(1+ (2+r)t)^{\delta} (1+(1+r)t)^{\chi -\delta - (r+1)k} \bigr].
\end{equation}
Expanding via the binomial theorem, equation~\eqref{e1} shows that the Segre integral is positive when the expression between brackets is a polynomial of degree at least $k$, which is the case for
\begin{gather*}
\delta\geq 0, \qquad \chi- \delta - (r+1)k\geq 0,\qquad \chi-(r+1)k\geq k.
\end{gather*}
However, since $\delta$ could be large, we seek better bounds. To this end, we change variables, setting \begin{gather*}
t=\frac{u}{1-(1+r)u}.
\end{gather*}
We rewrite \eqref{e1} as
\begin{align*}
\int_{X^{[k]}} s_{2k}\big(F^{[k]}\big)&=\operatorname{Res}_{t=0} (1+ (2+r)t)^{\delta} (1+(1+r)t)^{\chi -\delta - (r+1)k} \frac{\mathrm dt}{t^{k+1}}
\\
&= \operatorname{Res}_{u=0} (1+u)^{\delta} (1-(1+r)u)^{-\chi + (r+2) k-1} \frac{\mathrm du}{u^{k+1}}
\\
&= \operatorname{Coeff}_{u^k} (1+u)^{\delta} (1-(1+r)u)^{-\chi + (r+2) k-1}.
\end{align*}
Letting $a_i$ denote the coefficients of the term $(1+u)^{\delta}$, we have $a_i>0$ for $0\leq i\leq \delta$. Similarly, the coefficients of the second term are
\begin{gather*}
b_j=(1+r)^j (-1)^j \binom{-\chi +(r+2)k-1}{j}>0,
\end{gather*}
since $-\chi+(r+2)k-1<0$. Here, we use the standard definition of binomial numbers
\begin{gather*}
\binom{x}{j}=\frac{x(x-1)\cdots (x-j+1)}{j!}
\end{gather*}
for arbitrary $x$. Thus, the Segre integral equals $\sum a_i b_j$, the sum ranging over $i+j=k$, $0\leq i\leq \delta$, $j\geq 0$. The integral is positive since each term $a_ib_j>0$, and the sum is non-empty (it contains the term $a_0b_k$). The proof is complete.
\end{proof}

\begin{Remark}% \label{rem3}
The theorem is certainly not optimal in all cases, but it suffices for our purposes. To illustrate it, when $\delta=0$, equation \eqref{e1} yields the following result originally noted in \cite[Proposition 2.1]{MOP}:
\begin{equation}\label{f1}
\int_{X^{[k]}} s_{2k}\big(F^{[k]}\big)=(r+1)^k \binom{\chi-(r+1)k}{k}.
\end{equation}
In this case, the positivity of the Segre integral is guaranteed when $\chi\geq (r+2)k$, but also when $\chi<(r+1)k$ and $k$ even.

The vanishing of the Segre integrals~\eqref{f1} for exceptional bundles with $(r+1)k\leq \chi<(r+2)k$ played an important role in the proof of~(2.2) in~\cite{MOP}. The point of Theorem~\ref{tt} is that we can furthermore pin down the sign of the Segre integral for $\chi$ to the right of the above interval.
\end{Remark}

\begin{Remark}\label{remk}
In rank $1$, $(k-1)$-very ampleness of nef line bundles over $K3$ surfaces can be effectively studied using \cite[Theorem~2.1]{BS}. Specifically, if
$L$ is nef and $L^2>4k$, then either $L$ is $(k-1)$-very ample or else there exists an effective divisor $D$ such that $L-2D$ is $\mathbb Q$-effective, with
\begin{equation}\label{kva}
L.D-k\leq D^2<L.D/2<k.
\end{equation}
Furthermore, $D$ contains a subscheme $\zeta$ of length less or equal to $k$ such that
\begin{gather*}
H^0(L)\to H^0(L|_{\zeta})\quad\text{is not surjective.}
\end{gather*}
Over arbitrary smooth projective surfaces, a similar result ensures the $(k-1)$-very ampleness of the adjoint bundles $K_X+L$.

To our knowledge, an analogous criterion in higher rank is missing. We point out two constructions yielding $(k-1)$-very ample bundles over $K3$ surfaces:
\begin{itemize}\itemsep=0pt
\item [$(i)$] If $(X, H)$ satisfies $\text{Pic} (X)=\mathbb Z\langle H\rangle$ and $F$ is a $\mu_H$-stable vector bundle with $\det F=H$ and $\chi\geq (r+1)k+\delta$ then $F$ is $(k-1)$-very ample. This assertion follows by the proof of \cite[Proposition 2.2]{MOP}.
\item [$(ii)$] By \cite[Proposition 4.5]{BBF}, over any surface, twisting a globally generated vector bundle by a~$(k-1)$-very ample line bundle yields a $(k-1)$-very ample bundle (with large determinant).
\end{itemize}
For abelian surfaces, other constructions are possible via isogenies or extensions, see Section~\ref{ababab}.
\end{Remark}

\subsection[Big and nef tautological bundles over the Hilbert scheme of K3s]
{Big and nef tautological bundles over the Hilbert scheme of $\boldsymbol{K3}$s} \label{secex}

Theorem \ref{tt} applies to the three examples considered in Theorems~5.3, 5.5 and~5.7 and Corollaries~5.4, 5.6 and~5.8 of \cite{BBF}, and mentioned in items $(a)$--$(c)$ of the Introduction: line bundles and twists of Lazarsfeld--Mukai or Ulrich bundles. The goal here is to show how to extend the results in \cite{BBF} from $k\leq 3$ to any number of points.

Throughout this section, we assume $(X, H)$ is a $K3$ surface of Picard rank $1$, and $\text{Pic}(X)=\mathbb Z\langle H\rangle$. Let $H^2=2g-2$.

\begin{Corollary} \label{c4}Let $L_n=H^{\otimes n}$ for $n\geq 1$. Assume $g\geq 3k-1$. Then $(L_n)^{[k]}$ is big and nef over~$X^{[k]}$.
\end{Corollary}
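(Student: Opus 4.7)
The plan is to apply Theorem \ref{tt} to $F = L_n$, which has rank $r = 1$. It therefore suffices to verify $\delta \geq 0$, $\chi(L_n) \geq 3k$, and the $(k-1)$-very ampleness of $L_n$.

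The numerical invariants are immediate. From $\operatorname{ch}(L_n) = 1 + nH + \tfrac{1}{2}n^2 H^2$ and $\sqrt{\operatorname{Td}(X)} = 1 + [\text{pt}]$ on a $K3$ surface, one finds the Mukai vector $v(L_n) = \bigl(1,\, nH,\, n^2(g-1)+1\bigr)$, so that $\langle v, v\rangle = n^2(2g-2) - 2\bigl(n^2(g-1)+1\bigr) = -2$ and hence $\delta = 0$. Riemann--Roch then gives $\chi(L_n) = n^2(g-1) + 2 \geq g + 1 \geq 3k$, using the hypothesis $g \geq 3k-1$.

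The remaining, and main, task is to secure $(k-1)$-very ampleness of $L_n$, which I would verify via the Beltrametti--Sommese criterion recorded in Remark \ref{remk}. Since $L_n$ is ample with $L_n^2 = 2n^2(g-1) \geq 6k-4 > 4k$ whenever $k \geq 3$, the criterion reduces the problem to ruling out an effective divisor $D$ for which $L_n - 2D$ is $\mathbb{Q}$-effective and $L_n.D/2 < k$. The Picard rank one hypothesis forces $D = mH$ with $m \geq 1$, and then
\begin{gather*}
L_n.D/2 = mn(g-1) \geq g - 1 \geq 3k - 2 \geq k,
\end{gather*}
which contradicts the required strict bound. The small cases $k = 1, 2$, in which $L_n^2 > 4k$ can fail for $n = 1$, I would handle separately: for $k = 1$ only global generation is needed, and for $k = 2$ the property $1$-very ampleness coincides with very ampleness of $H$, which holds on $K3$s of Picard rank one with $H^2 \geq 8$ by classical results.

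With $(k-1)$-very ampleness in hand, Theorem \ref{tt} delivers that $(L_n)^{[k]}$ is big and nef over $X^{[k]}$. The main obstacle, to the extent there is one, is the Beltrametti--Sommese step above; the numerical hypotheses of Theorem \ref{tt} fall out immediately from Riemann--Roch and the fact that line bundles on a $K3$ have $\delta = 0$.
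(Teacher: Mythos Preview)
Your proposal is correct and follows essentially the same route as the paper: verify that $L_n$ is $(k-1)$-very ample and that the numerical hypotheses $\chi \geq 3k$, $\delta = 0$ hold, then invoke Theorem~\ref{tt} (equivalently, the positivity of the Segre integral via formula~\eqref{f1}). The only difference is in how the $(k-1)$-very ampleness step is handled: the paper simply cites \cite[Theorem~5.3]{BBF} for global generation of $(L_n)^{[k]}$, whereas you supply a direct argument via the Beltrametti--Sommese criterion of Remark~\ref{remk}, together with Saint-Donat's classical results on Picard rank~$1$ $K3$s for the borderline cases $k=1,2$. Your self-contained treatment is a reasonable alternative to the citation and buys independence from~\cite{BBF}; otherwise the two proofs are the same.
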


\begin{proof}
Global generation, and thus nefness, is explained in \cite[Theorem 5.3]{BBF}. To prove bigness, as also noted in \cite{BBF}, it suffices to establish the positivity of the Segre integral
\begin{gather*}
\int_{X^{[k]}} s_{2k} \big((L_n)^{[k]}\big)>0.
\end{gather*}
By formula \eqref{f1}, we have
\begin{gather*}
\int_{X^{[k]}} s_{2k} \big((L_n)^{[k]}\big)=2^k \binom{\chi(L_n)-2k}{k},
\end{gather*}
which is positive provided
\begin{gather*}
\chi(L_n)\geq 3k\iff 2+n^2(g-1)\geq 3k.
\end{gather*}
The latter inequality is clear under our hypothesis.
\end{proof}

We next consider Lazarsfeld--Mukai bundles and their induced tautological bundles over $X^{[k]}$. This geometric situation corresponds to Theorem 5.5 and Corollary 5.6 in \cite{BBF}. There are several ways of formulating the result, but in keeping with \cite{BBF}, we prefer bounds which do not depend on the rank.

Recall that the Lazarsfeld--Mukai bundles are obtained as duals $E=K_{C, L}^{\vee}$ to kernels
\begin{gather*}
0\to K_{C, L}\to H^0(X, L)\otimes \mathcal O_X\to \iota_{\star} L\to 0.
\end{gather*}
Here $L\to C$ is a line bundle over a nonsingular curve $C\in |H|$, of degree $d$ and with $r\geq 2$ sections, such that $L$ and $\omega_C\otimes L^{\vee}$ are globally generated. It follows that $E$ is globally generated, with
\begin{gather*}
\rk E=r\geq 2, \qquad c_1(E)=H, \qquad c_2(E)=d\implies v(E)=(r, H, g-1-d+r).
\end{gather*}
We let $\rho=g-r(r-1+g-d)$ denote the Brill--Noether number.

\begin{Example}%\label{lmm}
Let $E$ be a globally generated Lazarsfeld--Mukai bundle as above. Assume that
\begin{gather*}
\rho\geq 0, \qquad g>2k-2>0, \qquad g> \frac{2}{5}(d+1).
\end{gather*}
Then $(E\otimes H)^{[k]}$ is big and nef.
\end{Example}

\begin{proof}
Let $F=E\otimes H$. The assumption $g>2k-2$ was used in \cite[Theorem~5.5]{BBF} to prove that $F=E\otimes H$ is $(k-1)$-very ample; this is based on the result cited in Remark~\ref{remk}$(ii)$. As~noted in \cite{BBF}, bigness follows once we verify that $\int_{X^{[k]}} s_{2k}\big(F^{[k]}\big)>0$. To this end, we check that the assumptions of Theorem~\ref{tt} hold true. A simple calculation yields
\begin{gather*}
\chi(F)=g(r+3) -d +r-3,\qquad \delta(F)=1+\frac{1}{2} \langle v(F), v(F)\rangle=\rho.
\end{gather*}
The inequality $\chi(F)\geq (r+2)k$ is satisfied.
Indeed, by hypothesis $g\geq 2k-1$ and $g\geq \frac{2d+3}{5}$, hence averaging we have $g\geq k+\frac{d-1}{5}$. Then
\begin{align*}
\chi(F)-(r+2)k&=g(r+3)-d+r-3-(r+2)k
\\
&\geq \biggl(k+\frac{d-1}{5}\biggr)(r+3)-d+r-3-(r+2)k
\\
&=\frac{d+4}{5}(r-2)+(k-2)\geq 0,
\end{align*}
since $r, k\geq 2$.
\end{proof}

\begin{Example} The (untwisted) Lazarsfeld--Mukai bundles also yield big and nef vector bundles over $X^{[k]}$, under more restrictive assumptions. Take $r=2$ for simplicity. Assume
\begin{gather*}
2d-2\geq g> 2k-3+ \frac{3}{2}d,\qquad \text{which implies} \quad \chi(E)\geq 4k+\rho, \quad \rho\geq 0.
\end{gather*}
This is a bit stronger than what is needed, but it ensures $\chi\geq 4k$ and $\chi\geq 3k+\rho$ simultaneously. It is well known that $E$ is $\mu_H$-stable when $\text{Pic}(X)=\mathbb Z\langle H\rangle$. (Reason: any destabilizing quotient has rank $1$, slope $\leq 0$, and is globally generated since $E$ is. Hence the quotient is trivial, and thus $c_2(E)=d=0$, a contradiction.) Since $c_1(E)=H$, it follows $E$ is $(k-1)$-very ample by Remark \ref{remk}$(i)$. By Theorem \ref{tt}, we have that $E^{[k]}$ is big and nef.
\end{Example}

Finally, we turn to Ulrich bundles considered in Theorem 5.7 and Corollary 5.8 in \cite{BBF}. We~write $H^2=2h$, so that $g=h+1$. Recall that a bundle $E$ over $(X, H)$ is said to be Ulrich if
\begin{gather*}
H^{\star}(X, E(-H))=H^\star(X, E(-2H))=0.
\end{gather*}
Such bundles always exist for $K3$ surfaces of Picard rank $1$ by \cite[Theorem 1.5]{AFO}, and they have numerics
\begin{gather*}
\rk E=2a, \qquad c_1(E)=3aH,\qquad c_2(E)=9a^2h-4a(h-1).
\end{gather*}

\begin{Example}%\label{cc3}
Assume $h>2k-3>0$. Consider an Ulrich bundle $E$ as above. Then $(E\otimes H)^{[k]}$ is big and nef on the Hilbert scheme $X^{[k]}$.
\end{Example}

\begin{proof} Letting $F=E\otimes H$, we compute
\begin{gather*}
\chi(F)=12ah, \qquad \delta(F)=1+a^2h+4a^2.
\end{gather*}
As noted in \cite[Theorem 5.6]{BBF}, the bundle $F=E\otimes H$ is $(k-1)$-very ample if $h>2k-3$; this uses the statement cited in Remark 3(ii). To prove bigness, it remains to verify the positivity of the top Segre integral. By Theorem \ref{tt}, we check that
\begin{gather*}
\chi(F)\geq (r+2)k\iff 12ah\geq (2a+2)k.
\end{gather*}
This is clear since $h> 2k-3>0$.
\end{proof}

\begin{Example}
We can extend the result to Ulrich bundles $E$ over the polarized $K3$ surface $(X, mH)$ for all $m\geq 1$. By \cite[Proposition 4.4]{CNY}, the Mukai vectors of such Ulrich bundles are of the form
\begin{gather*}
v=\biggl(r, \frac{3rm}{2}H, h\big(2m^2r\big)-r\biggr).
\end{gather*}
By the same arguments, $(E\otimes H)^{[k]}$ is big and nef for $h>2k-3>0$.
\end{Example}

\section[Other K-trivial surfaces]
{Other $\boldsymbol K$-trivial surfaces} \label{secktriv}

\subsection{Abelian and bielliptic surfaces}\label{ababab}
The formulas in \cite{MOP} can be used to treat the case of abelian or bielliptic surfaces. For vector bundles $F\to X$, we set\vspace{-.5ex}
\begin{gather*}
r=\operatorname{rank}F,\qquad \chi=\chi(F),\qquad v=\operatorname{ch}(F), \qquad \delta=\frac{1}{2}\langle v, v\rangle.
\end{gather*}
We show
\begin{Theorem}
For a $(k-1)$-very ample vector bundle $F\to X$,
the tautological bundle $F^{[k]}\to X^{[k]}$ is big and nef provided that
$\chi\geq (r+2)k$ and~$\delta\geq 0$.
\end{Theorem}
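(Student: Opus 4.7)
The plan is to follow the template of Theorem~\ref{tt}, adapted to the feature that $\chi(\mathcal O_X)=0$ on abelian and bielliptic surfaces. As before, $(k-1)$-very ampleness of $F$ implies global generation, hence nefness, of $F^{[k]}$, and reduces bigness to showing that the top Segre integral $\int_{X^{[k]}} s_{2k}\big(F^{[k]}\big)$ is strictly positive.

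Next I would invoke the MOP structural formula, specialized to the case $\chi(\mathcal O_X)=0$. Since the exponent of $A_2$ in the generating series is tied to $\chi(\mathcal O_X)$, the $A_2$ factor drops out, yielding
\begin{gather*}
\sum_{k=0}^\infty z^k \int_{X^{[k]}} s\big(F^{[k]}\big) = A_0(z)^{c_2(F)} A_1(z)^{c_1^2(F)},
\end{gather*}
with the same $A_0$, $A_1$ and the same change of variables $z=t(1+(1+r)t)^{1+r}$ as in \eqref{aaa}. Extracting the coefficient of $z^k$ by the residue calculation used in \eqref{e1}, one must be careful that the Jacobian factor $(1+(1+r)(2+r)t)$ is no longer absorbed by $A_2$; tracking it through, and using the abelian/bielliptic identities $\chi(F)=\tfrac{1}{2}c_1^2-c_2$ and $\delta=\tfrac12\langle v,v\rangle$, I expect to land on a formula of the shape
\begin{gather*}
\int_{X^{[k]}} s_{2k}\big(F^{[k]}\big)=\operatorname{Coeff}_{t^k}\bigl[(1+(2+r)t)^{\delta}(1+(1+r)t)^{\chi-\delta-(r+1)k-1}\bigl(1+(1+r)(2+r)t\bigr)\bigr].
\end{gather*}

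After the same substitution $t=u/(1-(1+r)u)$, the extra Jacobian factor becomes $(1+(1+r)^2 u)/(1-(1+r)u)$, which shifts one exponent and introduces an overall factor $1+(1+r)^2 u$ with manifestly nonnegative coefficients. The integral then becomes $\operatorname{Coeff}_{u^k}$ of $(1+u)^{\delta}(1-(1+r)u)^{-\chi+(r+2)k-1}(1+(1+r)^2u)$, and under the hypotheses $\delta\geq 0$ and $\chi\geq (r+2)k$, the argument of Theorem~\ref{tt} applies verbatim: the first two factors have nonnegative coefficients (with $a_0=1$ and $b_k>0$), and multiplication by $1+(1+r)^2 u$ preserves this. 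The dominant term $a_0 b_k$ survives and forces strict positivity.

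The main obstacle is bookkeeping rather than conceptual: making sure the specialization of the MOP formula to surfaces with $\chi(\mathcal O_X)=0$ is applied correctly, and that the residual Jacobian factor from the change of variables does not perturb the coefficient positivity. A secondary point is verifying that bielliptic surfaces really behave like abelian ones here; this reduces to the fact that $K_X$ is numerically trivial and $\chi(\mathcal O_X)=0$, so all the relevant topological invariants entering the MOP series agree with the abelian case.
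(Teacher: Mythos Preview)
Your proposal is correct and follows essentially the same approach as the paper: both specialize the MOP structural formula to $\chi(\mathcal O_X)=0$, obtain the identical expression $\operatorname{Coeff}_{t^k}\bigl[(1+(2+r)t)^{\delta}(1+(1+r)t)^{\chi-\delta-(r+1)k-1}(1+(1+r)(2+r)t)\bigr]$, apply the same substitution $t=u/(1-(1+r)u)$, and conclude positivity by grouping into factors with nonnegative coefficients. The only cosmetic difference is that the paper groups $(1+u)^{\delta}(1+(1+r)^2u)$ together as the $a_i$'s, whereas you multiply by $1+(1+r)^2u$ last; both are equivalent, and your explanation of why the Jacobian factor survives (it is no longer cancelled by $A_2$) is a nice clarification the paper simply delegates to the MOP reference.
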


\begin{proof}
We follow the same steps as for $K3$ surfaces, but a few numerical changes are necessary. First, it was noted on \cite[p.~19]{MOP} that the analogue of equation \eqref{e1} for abelian or bielliptic surfaces takes the form
\begin{gather*}
\int_{X^{[k]}} s_{2k} \big(F^{[k]}\big)= \operatorname{Coeff}_{t^k}
\bigl[(1+ (2+r)t)^{\delta} (1+(1+r)t)^{\chi-\delta -(r+1)k-1} (1+(1+r)(2+r)t)\bigr].
\end{gather*}
Next, the change of variables
\begin{gather*}
t=\frac{u}{1-(1+r)u}
\end{gather*}
turns the above expression into
\begin{gather*}
\int_{X^{[k]}} s_{2k} \big(F^{[k]}\big)=\operatorname{Coeff}_{u^k}
\bigl[(1+u)^{\delta} (1-(1+r)u)^{-\chi+(r+2)k-1} \big(1+(1+r)^2u\big)\bigr].
\end{gather*}
The proof is completed by the same argument as in Theorem \ref{tt}, this time letting $a_i$ be the~coefficients of $(1+u)^{\delta}\big(1+(1+r)^2u\big)$ and letting $b_j$ be the~coefficients of $(1-(1+r)u)^{-\chi+(r+2)k-1}$.
\end{proof}

\begin{Corollary}\label{c10}
If $L$ is $(k-1)$-very ample line bundle over an abelian or bielliptic surface and $L^2\geq 6k$, then $L^{[k]}$ is big and nef.
\end{Corollary}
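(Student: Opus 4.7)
The plan is to apply the theorem just proved (the abelian/bielliptic analogue of Theorem~\ref{tt}) directly to $F=L$, with $r=\operatorname{rank} L=1$. There are only two numerical inputs to check: $\chi(L)\geq (r+2)k=3k$ and $\delta(L)\geq 0$. The $(k-1)$-very ampleness hypothesis is already assumed, so nothing further is needed for the ampleness input.

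First I would compute $\chi(L)$. On an abelian surface, $K_X=0$ and $\chi(\mathcal O_X)=0$, and on a bielliptic surface $K_X$ is torsion (hence numerically trivial) and again $\chi(\mathcal O_X)=0$. In either case, Riemann--Roch yields
\begin{gather*}
\chi(L)=\chi(\mathcal O_X)+\tfrac{1}{2}L\cdot(L-K_X)=\tfrac{1}{2}L^2.
\end{gather*}
The hypothesis $L^2\geq 6k$ therefore immediately gives $\chi(L)\geq 3k=(r+2)k$, which is the first inequality needed.

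Next I would verify $\delta(L)\geq 0$. With the convention $v=\operatorname{ch}(L)=\bigl(1,c_1(L),\tfrac{1}{2}c_1(L)^2\bigr)$ adopted in this section, the Mukai pairing gives
\begin{gather*}
\langle v,v\rangle=\int_X v_2^2-2v_0 v_4 = L^2-2\cdot 1\cdot\tfrac{1}{2}L^2=0,
\end{gather*}
so $\delta=\tfrac{1}{2}\langle v,v\rangle=0$. Thus $\delta\geq 0$ is in fact automatic for any line bundle on $X$, independently of the hypothesis on $L^2$.

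With both conditions of the theorem satisfied, the conclusion $L^{[k]}$ big and nef follows. There is no real obstacle; the only subtlety worth flagging is the coefficient $3$ in $L^2\geq 6k$, which traces back to the factor $r+2=3$ appearing in the $\chi\geq (r+2)k$ bound (equivalently, to the exponent of $(1-(1+r)u)$ in the integrand after the change of variables used in the proof of the theorem).
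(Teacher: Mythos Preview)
Your proof is correct and is precisely what the paper intends: Corollary~\ref{c10} is stated without proof as an immediate specialization of the preceding theorem to $r=1$, and your verification that $\chi(L)=\tfrac{1}{2}L^2\geq 3k$ and $\delta(L)=0$ supplies exactly the two numerical hypotheses needed.
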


In the following examples, we assume $X$ is an abelian surface with Picard rank $1$, with N\'eron--Severi ample generator $H$.

\begin{Example}%\label{abex}
Assume $H^2\geq 6k$. For $H^2>4k$, the line bundle $L_n=H^{\otimes n}$ is $(k-1)$-very ample for all $n\geq 1$. This is an immediate consequence of \cite[Theorem 2.1]{BS}. Indeed, using that the Picard rank is $1$, the inequality \eqref{kva} is impossible. For $H^2\geq 6k$, we also have $\chi(L_n)\geq 3k$. It~follows from Corollary \ref{c10} that $(L_n)^{[k]}$ is big and nef.
\end{Example}

In higher rank, just as for $K3$s, the reader can consider twists of Ulrich and Lazarfeld--Mukai bundles. Here, taking advantage of the abelian surface geometry, we discuss simple semihomogeneous bundles, and twists of unipotent and homogeneous bundles.
\begin{Example}%\label{aabex}
 Assume that $H$ is a principal polarization. Let $(a, b)=1$ be coprime positive integers with
 \begin{gather*}
 b>a^2k.
 \end{gather*}
 By \cite[Remark 7.13]{M}, there exist simple semihomogeneous vector bundles $W\to X$ with
 {\samepage\begin{gather*}
 \rk W=a^2, \qquad \mu(W)=\frac{bH}{a}\in NS(X)\otimes \mathbb Q, \qquad \chi(W)=b^2.
 \end{gather*}
 We claim $W^{[k]}$ is big and nef.}

We note first that $W$ is $(k-1)$-very ample. Indeed, it is shown in \cite[Theorem 5.8] {M} that
\begin{gather*}
W=f_{\star} L
\end{gather*}
for some line bundle $L\to Y$, where $f\colon Y\to X$ is an isogeny of degree $a^2$. It was remarked in \cite[Section 2.4]{Op} that $W$ has no higher cohomology for $b>0$. Consequently, the same is true about $L$. Since
\begin{gather*}
h^0(L)=h^0(W)=\chi(W)=b^2,
\end{gather*}
we conclude that $L$ is effective and $L^2=2b^2$.

We claim $L$ is $\big(a^2k-1\big)$-very ample for $b>a^2k$. This follows again by \cite[Theorem 2.1]{BS}. Indeed, the Picard rank is invariant under isogenies \cite[Proposition~3.2]{BL}, hence $Y$ has Picard rank $1$ since $X$ does. If $M$ is the ample N\'eron--Severi generator, write $L\equiv_{\text{num}} M^{\ell}$, for $\ell> 0$. We have
\begin{gather*}
2b^2=L^2=\ell^2 M^2\geq 2\ell^2\implies 0< \ell \leq b.
\end{gather*}
For any effective divisor $D\neq 0$, we have
\begin{gather*}
L.D= \ell M.D\geq \ell M.M=\frac{L^2}{\ell}=\frac{2b^2}{\ell}\geq 2b\geq 2a^2k.
\end{gather*}
This violates \eqref{kva}. Since $L$ is nef and $L^2=2b^2>4a^2k$, it follows that $L$ is $\big(a^2k-1\big)$-very ample.

To check $(k-1)$-very ampleness for $W$, note that if $\zeta$ is a subscheme of $X$ of length $k$, then \begin{gather*}
H^0(W)\to H^0(W\otimes\mathcal O_{\zeta}) \text{ surjective }\iff H^0(L)\to H^0(L\otimes \mathcal O_{f^{\star} \zeta}) \text { surjective}.
\end{gather*}
The latter is true since $L$ is $\big(a^2k-1\big)$-very ample and $f^{\star}\zeta$ has length $a^2k$.

Finally, the inequality
\begin{gather*}
\chi(W)\geq (r+2) k\iff b^2\geq \big(a^2+2\big)k
\end{gather*}
is certainly true when $b>a^2k$.
\end{Example}

\begin{Example} Assume $H^2>4k$. Let $E$ be a unipotent bundle of rank $r\geq 2$, that is, $E$~admits a filtration whose successive quotients are the trivial line bundle \cite[Definition~4.5]{M}. Let $F=E\otimes H$. If $H^2> 4k$ then $H$ is $(k-1)$-very ample by \cite[Theorem 2.1]{BS}, see \eqref{kva}. It follows that~$F$ is also $(k-1)$-very ample. Indeed, $F$ is obtained as an iterated extension of $H$. At each step, we inductively show that the extension is $(k-1)$-very ample without higher cohomology, by examining the relevant short exact sequences. The filtration of $F$ also gives
\begin{gather*}
\chi(F)=r\chi(H)> 2rk\geq (r+2)k.
\end{gather*}
Thus $(E\otimes H)^{[k]}$ is big and nef.

Let $E$ be homogeneous bundle of rank $r\geq 2$, that is, $E$ is invariant by translations on $X$. By \cite[Theorem 4.17]{M}, we can write
\begin{gather*}
E=\bigoplus_i U_i\otimes P_i,
\end{gather*}
with $U_i$ unipotent, and $P_i$ a line bundle of degree $0$. Repeating the argument above for each summand, we show first that $E\otimes H$ is $(k-1)$-very ample, and then $(E\otimes H)^{[k]}$ is big and nef.
\end{Example}

For bielliptic surfaces, we leave specific examples to the reader, mentioning only that $(k-1)$-very ampleness of line bundles is studied in \cite{MP}.

\subsection{Enriques surfaces} By contrast, the case of Enriques surfaces requires more care, due to the shape of the algebraic functions giving the Segre integrals. As before, we write $v(F)=\operatorname{ch}(F) \sqrt{\operatorname{Td}(X)}$ for the Mukai vector, and set
\begin{gather*}
\delta=\frac{1}{2}+\frac{1}{2}\langle v, v\rangle \implies \delta=rc_2-\frac{r-1}{2}c_1^2-\frac{r^2-1}{2}.
\end{gather*}
Note that $\delta$ is an integer if and only if the rank $r$ is odd. In this case, we prove:

\begin{Theorem}\label{tt2}
Let $F$ be a $(k-1)$-very ample bundle of odd rank $r$ on an Enriques surface, with
\begin{gather*}
\chi\geq 2k (r+1), \qquad \delta\geq 0.
\end{gather*}
Then $F^{[k]}$ is big and nef.
\end{Theorem}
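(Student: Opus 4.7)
The plan is to follow the same template as the proofs for $K3$ and abelian/bielliptic surfaces. Since $F$ is $(k-1)$-very ample, the tautological bundle $F^{[k]}$ is globally generated and hence nef, so by the criterion \eqref{pos} it suffices to verify the positivity of the top Segre integral $\int_{X^{[k]}} s_{2k}(F^{[k]})$. The first step is therefore to invoke the Enriques analogue of the structural series formula \eqref{eq1} from \cite{MOP}: one expects the generating series $\sum z^k \int s(F^{[k]})$ to be given by $A_0^{c_2} A_1^{c_1^2} A_2$ multiplied by an Enriques-specific correction factor reflecting the non-trivial $2$-torsion canonical class $K_X$. A residue calculation parallel to the one producing \eqref{e1} should then allow one to express
\[
\int_{X^{[k]}} s_{2k}\bigl(F^{[k]}\bigr) = \operatorname{Coeff}_{t^k}\bigl[(1+(2+r)t)^{\delta}(1+(1+r)t)^{N}\cdot G(t)\bigr]
\]
for an exponent $N=N(\chi,\delta,r,k)$ and a correction factor $G(t)$. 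The hypothesis that the rank $r$ is odd is essential at this stage, because it makes $\delta$ an integer and thus permits a standard binomial expansion of the first factor.

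The second step is to apply the now-familiar change of variables $t = u/(1-(1+r)u)$, converting the target coefficient into
\[
\operatorname{Coeff}_{u^k}\bigl[(1+u)^{\delta}(1-(1+r)u)^{-\chi + 2(r+1)k - 1}\cdot \widetilde G(u)\bigr],
\]
where $\widetilde G(u)$ is the image of $G$ under the substitution. The numerical hypothesis $\chi \geq 2k(r+1)$ is calibrated precisely so that $(1-(1+r)u)$ appears to a strictly negative integer power, making the coefficients of its expansion all strictly positive, while $(1+u)^{\delta}$ has non-negative integer coefficients since $\delta \geq 0$. One then writes the desired coefficient as a finite convolution of the three factors and argues, exactly as in Theorem \ref{tt} and Corollary \ref{c10}, that every contributing summand is non-negative and at least one (such as the pairing of the constant term of $(1+u)^{\delta}$ with the $u^k$ coefficient coming from the remaining factors) is strictly positive.

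The main obstacle, and the reason the Enriques case is more delicate than the $K3$ or abelian ones, is controlling the correction factor $\widetilde G(u)$ after the change of variables. In the abelian/bielliptic case the analogous factor is the simple linear polynomial $1+(1+r)^2 u$, whose non-negativity is trivial; for Enriques surfaces the correction arising from $K_X$ has a more involved shape, and verifying that $\widetilde G(u)$ still has non-negative coefficients (or, failing that, absorbing any sign problems into the bookkeeping) is where the odd-rank restriction and the sharper numerical bound $\chi \geq 2k(r+1)$ (as opposed to $\chi \geq k(r+2)$ in the $K3$ case) play their role. Once this point is settled, the remainder of the argument is a direct transcription of the proof of Theorem \ref{tt}.
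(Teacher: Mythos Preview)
Your outline is broadly right, but there is a concrete gap at the crucial step, and it is tied to a miscalculation of the exponent.

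After the substitution $t=u/(1-(1+r)u)$, the expression one actually obtains is
\[
\int_{X^{[k]}} s_{2k}\bigl(F^{[k]}\bigr)=\operatorname{Coeff}_{u^k}\Bigl[(1-(1+r)u)^{-\alpha}(1+u)^{\delta}\bigl(1+(1+r)^2u\bigr)^{1/2}\Bigr],
\]
with $\alpha=\chi-(r+2)k+1$, \emph{not} $-\chi+2(r+1)k-1$. The Enriques correction is the half-integer power $A_2^{1/2}$, which produces the factor $\bigl(1+(1+r)^2u\bigr)^{1/2}$ after the change of variables. In particular, the hypothesis $\chi\geq 2k(r+1)$ is \emph{not} what forces the exponent on $(1-(1+r)u)$ to be negative; that requires only $\chi\geq (r+2)k$, exactly as in the $K3$ case. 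So your stated ``calibration'' of the bound is incorrect.

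The real obstacle is that $\widetilde G(u)=\bigl(1+(1+r)^2u\bigr)^{1/2}$ does \emph{not} have non-negative coefficients: the fractional binomials $\binom{1/2}{j}$ alternate in sign for $j\geq 1$. Hence the convolution argument you sketch (``every contributing summand is non-negative'') fails as stated. What the paper does instead is show that the product $(1-(1+r)u)^{-\alpha}\bigl(1+(1+r)^2u\bigr)^{1/2}$ has positive coefficients up to order $k$, by pairing the alternating terms: for $j$ odd, one checks that the $(i,j)$ and $(i-1,j+1)$ contributions sum to something positive. This pairing inequality reduces to a simple bound that requires $\alpha-1\geq rk$, i.e., $\chi\geq 2k(r+1)$ --- and \emph{that} is where the stronger numerical hypothesis enters. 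Your proposal identifies the right location of the difficulty but not the mechanism that resolves it; ``absorbing sign problems into the bookkeeping'' needs to be replaced by this explicit pairing estimate.
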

Computer experiments show that the bound $\chi\geq (r+2)k$ imposed for the other $K$-trivial surfaces is insufficient here. Nonetheless, we have the following

\begin{Conjecture}
For all ranks, odd or even, Theorem $\ref{tt2}$ holds under the weaker assumption
\begin{gather*}
\chi\geq \biggl(\frac{5r}{4}+2\biggr)k,\quad \delta\geq 0.
\end{gather*}
\end{Conjecture}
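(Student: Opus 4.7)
The plan is to adapt the coefficient-extraction arguments of Theorems~\ref{tt} and~\ref{tt2} to the Enriques setting, starting from the closed form for the generating series of Segre integrals on Enriques surfaces established in \cite{MOP}. Since $\chi(\mathcal{O}_X) = 1$ lies halfway between the abelian value $0$ and the $K3$ value $2$, the relevant algebraic function should carry a square-root correction absent in Theorem~\ref{tt}. After the substitution $t = u/(1-(1+r)u)$, I would expect to obtain an expression of the shape
\begin{equation*}
\int_{X^{[k]}} s_{2k}\bigl(F^{[k]}\bigr) = \operatorname{Coeff}_{u^k}\bigl[(1+u)^{\delta}(1-(1+r)u)^{-\chi+(r+2)k-1} R(u)\bigr],
\end{equation*}
where $R(u)$ is the Enriques-specific correction factor, algebraic in $u$.

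In odd rank, where $\delta$ is an integer, I would decompose $R(u) = R_+(u) - R_-(u)$ into its nonnegative and nonpositive parts, and show that, at degree $k$, the convolution of $(1+u)^{\delta}(1-(1+r)u)^{-\chi+(r+2)k-1}$ with $R_+$ dominates the one with $R_-$. The $u^j$-coefficient of $(1-(1+r)u)^{-\chi+(r+2)k-1}$ grows like $(1+r)^j$ times a binomial in $\chi-(r+2)k$, so the additional slack $\chi \geq (5r/4+2)k$ beyond $(r+2)k$ should supply precisely the multiplicative ratio needed to absorb the leading growth of $R_-$. The bound $\chi \geq 2k(r+1)$ in Theorem~\ref{tt2} presumably falls out of a crude term-by-term estimate; tightening it to $5r/4+2$ should come from pairing each negative contribution with a specific positive contribution of the same total degree, rather than bounding the two parts separately.

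For even rank, the half-integer $\delta$ makes $(1+u)^{\delta}$ a power series with oscillating signs past index $\lfloor \delta\rfloor$, and this is the principal obstacle. The natural workaround is to absorb the fractional part of $\delta$ into the square root appearing in $R(u)$, so that the resulting joint factor is a rational function with a controllable sign pattern, reducing the problem to an integer-exponent analysis mirroring the odd-rank case. Computer experiments cited in the paper confirm that the $(r+2)k$-style bound fails for Enriques, so a successful argument must genuinely use the extra slack, and the oscillation of $(1+u)^{\delta}$ in even rank is exactly what forces a larger constant; extracting the sharp linear coefficient $5r/4+2$ from such a reorganization, uniformly in the parity of $r$, is the heart of the matter.
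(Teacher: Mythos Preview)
The statement you are attempting to prove is explicitly labeled a \emph{Conjecture} in the paper, and the paper offers no proof of it. The material immediately following the conjecture is the proof of Theorem~\ref{tt2} (the odd-rank case with the stronger bound $\chi\geq 2k(r+1)$), not of the conjecture itself. So there is no ``paper's own proof'' to compare your proposal against.

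As for the content of your sketch: your setup is accurate and matches what the paper actually does in the proof of Theorem~\ref{tt2}. The correction factor you call $R(u)$ is $(1+(1+r)^2u)^{1/2}$, and the paper's odd-rank argument already proceeds by pairing consecutive coefficients of $(1-(1+r)u)^{-\alpha}(1+(1+r)^2u)^{1/2}$ to show positivity; your $R_+/R_-$ decomposition is a looser version of that same idea. However, your proposal is a plan, not a proof: you never explain \emph{how} the specific constant $\tfrac{5r}{4}+2$ emerges from a sharper pairing, you merely assert that ``tightening'' should work. In the even-rank case the obstacle you identify is real---$(1+u)^{\delta}$ with half-integer $\delta$ has eventually alternating coefficients---but your proposed remedy (``absorb the fractional part of $\delta$ into the square root'') produces $(1+u)^{\lfloor\delta\rfloor}\cdot\bigl((1+u)(1+(1+r)^2u)\bigr)^{1/2}$, and the square-root factor still has alternating signs; you have not reduced to a rational function as claimed. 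Nothing in your outline supplies the missing estimate that would pin down $\tfrac{5r}{4}+2$ uniformly in parity, which is precisely why the paper records this as an open conjecture.
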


\begin{proof}[Proof of Theorem \ref{tt2}] Since $F$ is $(k-1)$-very ample, $F^{[k]}$ is globally generated, hence nef. To establish bigness, it remains to show that the degree of the top Segre class $s_{2k}\big(F^{[k]}\big)>0$.

By \cite[Theorem 1]{MOP}, the Enriques analogue of \eqref{eq1} takes the form \begin{equation}\label{eqx}
\sum_{k=0}^{\infty} z^k \int_{X^{[k]}} s \big(F^{[k]}\big) = A_0(z)^{c_2(F)} A_1(z)^{c_1(F)^2} A_2(z)^{\frac{1}{2}}
\end{equation}
for exactly the same universal functions $A_0$, $A_1$, $A_2$ which appear for $K3$ surfaces. Using \eqref{eqx} and following the reasoning in \cite[p.~11]{MOP}, with the modified numerics, we obtain an expression for the top Segre integral
\begin{gather}\label{pro}
\int_{X^{[k]}} s_{2k}\big(F^{[k]}\big)\nonumber
\\ \qquad
{}=\operatorname{Coeff}_{t^k} \big[(1+(1+r)t)^{\chi-\delta-k(r+1) - \frac{1}{2}} (1+(2+r)t)^{\delta} (1+(1+r)(2+r)t)^{\frac{1}{2}}\big].
\end{gather}
This is the Enriques analogue of equation~\eqref{e1}, but the half integer exponents complicate our analysis.

To determine the sign of the Segre integral, we carry out the usual change of variables
\begin{gather*}
t=\frac{u}{1-(1+r)u}.
\end{gather*}
{\samepage Then we rewrite \eqref{pro} as
\begin{align*}
\int_{X^{[k]}} s_{2k}\big(F^{[k]}\big) &= \operatorname{Res}_{t=0} (1\!+(1\!+r)t)^{\chi-\delta-k(r+1) - \frac{1}{2}} (1\!+(2\!+r)t)^{\delta} (1\!+(1\!+r)(2\!+r)t)^{\frac{1}{2}}\frac{\mathrm dt}{t^{k+1}}
\\
&=\operatorname{Res}_{u=0} (1-(1+r)u)^{-\chi+k(r+2)-1}(1+u)^{\delta} \big(1+(1+r)^2u\big)^{\frac{1}{2}} \frac{\mathrm du}{u^{k+1}}
\\
&=\operatorname{Coeff}_{u^k} (1-(1+r)u)^{-\alpha} (1+u)^{\delta}\big(1+(1+r)^2u\big)^{\frac{1}{2}},
\end{align*}
where $\alpha=\chi-k(r+2)+1\geq kr+1$.}

We note that when $r$ is odd, $\delta\geq 0$ is an integer. Thus, the middle term $(1+u)^{\delta}$ has nonnegative coefficients and constant term equal to $1$. We claim that
\begin{gather*}
(1-(1+r)u)^{-\alpha}\big(1+(1+r)^2u\big)^{\frac{1}{2}}
\end{gather*}
has positive coefficients up to order $k$. The same will therefore be true after multiplying by $(1+u)^{\delta}$, showing that the top Segre class is positive.

A different idea is needed to establish the above claim. We change variables $u\mapsto u/(1+r)$ and consider instead the series
\begin{gather*}
W=(1-u)^{-\alpha}(1+(1+r)u)^{\frac{1}{2}}.
\end{gather*}
For each $0\leq m\leq k$, the coefficient of $u^m$ in $W$ equals
 \begin{equation}\label{eee}
 \sum_{i+j=m} \binom {-\alpha}{i} \binom{\frac{1}{2}}{j} (-1)^{i} (1+r)^j=\sum_{i+j=m} \binom{\alpha+i-1}{i} \binom{\frac{1}{2}}{j} (1+r)^j.
 \end{equation}
 The term corresponding to $i=m$, $j=0$ is clearly positive since $\alpha\geq 1$. We will ignore this term for the analysis. The next term $i=m-1$, $j=1$ is positive as well. The remaining terms however have alternating signs because of the fractional binomials. We show nonetheless that the sum of the consecutive $(i, j)$ and $(i-1, j+1)$ terms is positive, for $j$ odd:
\begin{equation}\label{summ}
\binom {\alpha+i-1}{i} \binom{\frac{1}{2}}{j} (1+r)^j+\binom {\alpha+i-2}{i-1} \binom{\frac{1}{2}}{j+1} (1+r)^{j+1}>0.
\end{equation}
This proves that the alternating sum \eqref{eee} is positive as well. To justify \eqref{summ}, we note that
\begin{gather*}
\binom {\alpha+i-1}{i}=\binom {\alpha+i-2}{i-1} \frac{\alpha+i-1}{i}>0, \qquad \binom{\frac{1}{2}}{j+1} = \binom{\frac{1}{2}}{j} \frac{\frac{1}{2}-j}{j+1}.
\end{gather*}
For $j$ odd, we have $\binom{\frac{1}{2}}{j} >0$. After cancellation, the inequality to establish becomes
\begin{gather*}
\frac{\alpha+i-1}{i}+(1+r) \frac{\frac{1}{2}-j}{j+1}>0.
\end{gather*}
Writing $i=m-j$, and using $\alpha-1=\chi-(r+2)k\geq rk\geq rm$, it suffices to establish
\begin{gather*}
\frac{rm+m-j}{m-j}+(1+r) \frac{\frac{1}{2}-j}{j+1}>0\iff j^2r+\frac{r+3}{2} (m-j)+mr>0,
\end{gather*} which is clearly true.
\end{proof}

\begin{Corollary}
If $L$ is a $(k-1)$-very ample line bundle over an Enriques surface $X$, $k\geq 2$, then $L^{[k]}$ is big and nef.

In particular, if $H$ is an ample line bundle over an Enriques surface, and $L_n=H^{\otimes n}$ then $(L_n)^{[k]}$ is big and nef for all $n\geq k+1$.
\end{Corollary}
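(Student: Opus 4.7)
The plan is to specialize Theorem~\ref{tt2} to rank one and to invoke the Beltrametti--Sommese criterion for the second assertion. For any line bundle $L$, the invariant $\delta$ defined just before Theorem~\ref{tt2} reduces to $c_2(L)=0$, so $\delta\ge 0$ is automatic; nefness of $L^{[k]}$ follows from $(k-1)$-very ampleness, and bigness reduces via \eqref{pos} to showing $\int_{X^{[k]}} s_{2k}(L^{[k]})>0$.

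Specializing \eqref{pro} to $r=1,\delta=0$ and performing the substitution $t=u/(1-2u)$ used in the proof of Theorem~\ref{tt2}, the Segre integral becomes
\[
\int_{X^{[k]}} s_{2k}\bigl(L^{[k]}\bigr)=\operatorname{Coeff}_{u^k}\bigl[(1-2u)^{-\alpha}(1+4u)^{1/2}\bigr],\qquad \alpha=\chi(L)-3k+1.
\]
Since $L$ is $(k-1)$-very ample with $k\ge 2$, it is very ample and hence ample, and Kawamata--Viehweg vanishing (using that $K_X$ is numerically trivial on an Enriques surface) gives $\chi(L)=h^0(L)=1+L^2/2$. The direct bound $\chi(L)\ge 4k$ of Theorem~\ref{tt2} would force $L^2\ge 8k-2$, which is not automatic: the Fano polarization is very ample with $L^2=10,\chi=6$, short of $4k=8$ already for $k=2$. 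I would therefore refine the pairing of consecutive terms in the proof of Theorem~\ref{tt2}, exploiting the absence of the $(1+u)^\delta$ middle factor in rank one, to establish positivity of the coefficient under a bound on $\alpha$ (equivalently $L^2$) compatible with what $(k-1)$-very ampleness forces.

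For the in particular statement, set $L_n=H^{\otimes n}$ with $n\ge k+1$. Using that the intersection form on an Enriques surface is even and $H$ is ample, $L_n^2=n^2H^2\ge 2(k+1)^2>4k$, while every nonzero effective divisor $D$ satisfies $L_n\cdot D=nH\cdot D\ge n\ge k+1$. A parity check on $D^2$ rules out the obstruction \eqref{kva}, so the Beltrametti--Sommese criterion (cf.\ Remark~\ref{remk}) yields that $L_n$ is $(k-1)$-very ample. Since $\chi(L_n)=1+n^2H^2/2$ comfortably satisfies whatever lower bound is required, the first part of the corollary applies.

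The principal obstacle is the refined coefficient analysis of $\operatorname{Coeff}_{u^k}\bigl[(1-2u)^{-\alpha}(1+4u)^{1/2}\bigr]$ beyond the threshold $\alpha\ge k+1$ furnished by Theorem~\ref{tt2}; the cleanest route is likely a sharper pairing of consecutive terms in the alternating expansion of $(1+4u)^{1/2}$, or a direct recursive argument on the coefficients showing positivity under a suitably weaker hypothesis on $\chi(L)$.
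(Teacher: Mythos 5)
Your reduction of the first statement to Theorem~\ref{tt2} with $r=1$, $\delta=0$ correctly identifies the difficulty — $(k-1)$-very ampleness does not give $\chi(L)\geq 4k$ for small $k$, as your Fano-polarization example shows — but you then leave precisely that difficulty unresolved: the positivity of $\operatorname{Coeff}_{u^k}\bigl[(1-2u)^{-\alpha}(1+4u)^{1/2}\bigr]$ for $\alpha<k+1$ is announced as something you ``would'' establish by a refined pairing, with no bound identified and no argument given. This is the heart of the corollary, and it is not a routine refinement: the paper records that computer experiments show $\chi\geq 3k$ is insufficient on Enriques surfaces, and even the improved threshold $\chi\geq\frac{13}{4}k$ is only conjectural there; note that the Fano case $\chi=6$, $k=2$ lies below that conjectural bound as well. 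The paper closes the gap with an input you are missing: by \cite[Theorem~2.4]{S}, a $(k-1)$-very ample line bundle on an Enriques surface satisfies $L^2\geq(k+1)^2$, hence $\chi(L)=1+\tfrac{L^2}{2}>4k$ once $k\geq 6$, so Theorem~\ref{tt2} applies directly; for $2\leq k\leq 5$ only the finitely many values $1+\tfrac{(k+1)^2}{2}\leq\chi<4k$ remain, and these are verified by hand from \eqref{pro}. Without the Szemberg bound your problem is not even reduced to a finite check, so as written the proof of the first (main) assertion is incomplete.

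For the ``in particular'' statement the paper simply cites \cite[Proposition~2.5]{S} ($H^{\otimes n}$ is $(k-1)$-very ample for $n\geq k+1$), whereas you rederive it from the Beltrametti--Sommese criterion; that route is viable but your sketch has two smaller issues. First, the form of the criterion quoted in Remark~\ref{remk} applies to $L$ itself only when $K_X$ is trivial; on an Enriques surface you should apply the adjoint version to $L_n=K_X+M$ with $M=L_n-K_X$ nef and $M^2>4k$ (harmless, since $K_X$ is numerically trivial). Second, parity of $D^2$ alone does not kill the obstruction \eqref{kva}: from $nH\cdot D<2k$ and $n\geq k+1$ one gets $H\cdot D=1$ and $D^2\geq n-k$, and evenness only forces $D^2\geq 2$, which is still compatible with $D^2<n/2$ for $n\geq 5$; you need in addition the Hodge index inequality $(H\cdot D)^2\geq H^2D^2\geq 4$ (or a similar argument) to reach a contradiction. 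These points are fixable, but they do not repair the missing positivity analysis in the first part, on which the second part depends.
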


\begin{proof} The second half of the corollary follows from the first. Indeed, it is noted in \cite[Proposition $2.5$]{S} that if $H$ is ample then $L_n=H^{\otimes n}$ is $(k-1)$-very ample for all $n\geq k+1$.

We prove the first statement. If $\chi:=\chi(L)\geq 4k$, the assertion follows from Theorem \ref{tt2} with $r=1$. Now, over Enriques surfaces, $(k-1)$-very ampleness imposes numerical restrictions on $L$ which are stronger than for the other $K$-trivial surfaces. Indeed, it was noted in \cite[Theorem~2.4]{S} that if $L$ is $(k-1)$-very ample, then $L^2\geq (k+1)^2$.
When $k\geq 6$, this is sufficient to guarantee that
\begin{gather*}
\chi(L)=1+\frac{L^2}{2}\geq 1+\frac{(k+1)^2}{2}>4k.
\end{gather*}
When $2\leq k\leq 5$, the bound $\chi\geq 4k$ required by Theorem \ref{tt2} may fail. However, the finitely many cases
\begin{gather*}
4k>\chi\geq 1+ \frac{(1+k)^2}{2}, \qquad 2\leq k\leq 5,
\end{gather*}
can be checked by hand using equation \eqref{pro}.
\end{proof}

\section{Other geometries} \label{secother}
The Segre integrals for arbitrary surfaces are established only when $F$ has rank $1$ or $2$, see \cite{MOP1, MOP}. In rank $1$, the answers were conjectured by Lehn \cite{Le}. The formulation below can be found in \cite{MOP}:
\begin{equation}\label{lehnc}
\sum_{n=0}^{\infty} z^k \int_{X^{[k]}} s\big(L^{[k]}\big) = A_1 (z)^{L^2} A_2(z)^{\chi(\mathcal O_X)} A_3 (z)^{L.K_X} A_4(z)^{K_X^2}.
\end{equation}
Here, for $z=t(1+2t)^2$, we have
\begin{gather*}
A_1(z)=(1+2t)^{\frac{1}{2}},
\\
A_2(z)=(1+2t)^{\frac{3}{2}} (1+6t)^{-\frac{1}{2}},
\\
A_3(z)=\frac{1}{2} (1+2t)^{-1} \left(\sqrt{1+2t}+\sqrt{1+6t}\right),
\\
 A_4(z)=4 (1+2t)^{\frac{1}{2}} (1+6t)^{\frac{1}{2}} \left(\sqrt{1+2t}+\sqrt{1+6t}\right)^{-2}.
 \end{gather*}

\subsection{A positivity result} It is more difficult to determine the sign of the top Segre integral from the above formulas. The~following lemma plays an important role in the analysis. We will apply it in the next section to geometric situations.

\begin{Lemma}\label{cl}
For all integers $m$, $n$, $p$ with $m\geq 0$, $p\geq 0$ such that $m+n+p$ is even, the series
\begin{gather*}
f(t)=\left(\sqrt{1+2t}+\sqrt{1+6t}\right)^m {(1+2t)}^{\frac{n-1}{2}} (1+6t)^{\frac{p-1}{2}}
\end{gather*}
has positive coefficients up to order less or equal than $\min\big(\frac{1}{2}(m+n+p)-1, m-1\big)$.
\end{Lemma}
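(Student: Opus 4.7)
My plan is to expand $f(t)$ via the binomial theorem applied to $(A+B)^m$, where $A=\sqrt{1+2t}$ and $B=\sqrt{1+6t}$. This rewrites
\[
f(t) = \sum_{k=0}^{m} \binom{m}{k}(1+2t)^{(n+k-1)/2}(1+6t)^{(p+m-k-1)/2},
\]
where the two exponents sum to $\tfrac{1}{2}(m+n+p)-1$ and, because $m+n+p$ is even, share the same parity for every $k$. Splitting the sum according to that common parity produces a decomposition
\[
f(t) = P(t) + \sqrt{(1+2t)(1+6t)}\,Q(t),
\]
in which $P$ and $Q$ are finite linear combinations of $\binom{m}{k}(1+2t)^{a}(1+6t)^{b}$ with integer exponents; in the range of interest these exponents are nonnegative, so $P$ and $Q$ have nonnegative coefficients.

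The next step is to estimate $[t^d]f$ via the two pieces. The summands of $P$ manifestly contribute positively up to degree $a+b \leq \tfrac{1}{2}(m+n+p)-1$, accounting for the first bound in the lemma. For the mixed part $\sqrt{(1+2t)(1+6t)}\,Q(t)$, the Taylor expansion of the radical has coefficients of alternating sign beyond $t^1$, so I would follow the pairing strategy used in the proof of Theorem~\ref{tt2}: perform a residue change of variables of the form $t\mapsto u/(1-\lambda u)$ in order to rationalize one of the radicals, extract the coefficient of $u^d$ as an alternating sum in the fractional binomials $\binom{1/2}{j}$, and group consecutive opposite-sign terms into pairs whose sum is positive, as in the inequality~\eqref{summ}. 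The bound $d\leq m-1$ enters precisely because this pairing uses adjacent indices $k$ and $k+1$ from the outer binomial sum and can only be fully carried out in that range.

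The main obstacle is that, unlike in the Enriques setting where only one half-integer factor $(1+\lambda u)^{1/2}$ appears, here two independent square roots $\sqrt{1+2t}$ and $\sqrt{1+6t}$ are mixed together by the factor $(A+B)^m$. The pairing inequality must therefore be established simultaneously in the outer binomial index $k$ and in the inner expansion variable, which couples the coefficients $\binom{m}{k}$ to the fractional binomials $\binom{1/2}{j}$. Verifying that the dominant positive contributions strictly outweigh the alternating negative ones exactly up to the sharp bound $d=\min\bigl(\tfrac{1}{2}(m+n+p)-1,\,m-1\bigr)$ is the technically delicate core of the argument.
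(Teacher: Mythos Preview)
Your proposal does not close the gap; it is a plan that correctly identifies the central difficulty and then stops short of resolving it.

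The additive decomposition $f=P+\sqrt{(1+2t)(1+6t)}\,Q$ is valid, but it does not lead anywhere. First, the assertion that ``in the range of interest these exponents are nonnegative'' is false in general: the lemma only assumes $m\ge 0$ and $p\ge 0$, so $n$ may be negative, and then many of the exponents $(n+k-1)/2$ are negative integers, producing factors $(1+2t)^a$ with alternating-sign coefficients. Second, even when $P$ and $Q$ do have nonnegative coefficients, you have shown nothing about $f$ itself: $\sqrt{(1+2t)(1+6t)}=1+4t-2t^2+\cdots$ has alternating signs from $t^2$ onward, so the mixed piece $\sqrt{(1+2t)(1+6t)}\,Q(t)$ contributes negatively, and you give no argument that $P$ dominates it. Third, the Enriques-style substitution $t\mapsto u/(1-\lambda u)$ is a M\"obius map; it can rationalize $\sqrt{1+2t}$ or $\sqrt{1+6t}$ but never both, so the pairing inequality of Theorem~\ref{tt2} cannot be imported in the way you suggest. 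Your final paragraph essentially concedes this.

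The paper's argument is structurally different. It writes $f=F\cdot P$ multiplicatively, with $P$ a genuine polynomial with positive coefficients absorbing the $n$- and $p$-dependence (when $n\ge 0$; the case $n<0$ is treated directly), and isolates the hard part
\[
F(t)=\bigl(\sqrt{1+2t}+\sqrt{1+6t}\bigr)^m(1+2t)^{-1/2}(1+6t)^{-1/2}.
\]
The key step, which your sketch is missing, is a \emph{non-M\"obius} change of variable
\[
t=\frac{s(s+1)}{2\sqrt{3}\,s+(2+\sqrt{3})},
\]
under which all three of $\sqrt{1+2t}$, $\sqrt{1+6t}$, and their sum become rational in $s$ simultaneously. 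A second, now M\"obius, substitution in $s$ then turns the residue into the $u^k$-coefficient of a product $(1+(a+1)u)^{m-k-1}\bigl(1-\tfrac{a+1}{a}u\bigr)^{k-m/2}$ with $a>0$, which is manifestly positive under the stated bounds on $k$. No pairing of alternating fractional binomials is needed. This quadratic change of variables is the missing idea.
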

Taking the minimum is necessary. Indeed, for $(m, n, p)=(2, 19, 1)$ the term of order $\frac{1}{2}(m+n+p)-1$ has negative coefficient, while for $(m, n, p)=(4, 0, 0)$, the term of order $m-1$ has zero coefficient. The lemma also holds for $m+n+p$ odd, but this case {\it never} occurs geometrically. The hypothesis $m, p\geq 0$ can fail in geometric examples, but our proof requires it.

\begin{proof}
The argument is not straightforward due to the alternating signs
in the expansions
\begin{gather*}
\sqrt{1+2t}= 1+t - \frac{t^2}{2}+\frac{t^3}{2}-\frac{5t^4}{8}+\cdots,
\\
\sqrt{1+6t}=1 + 3t - \frac{9t^2}{2}+\frac{27t^3}{2} - \frac{405 t^4}{8} +\cdots.
\end{gather*}
However, a residue calculation and suitable changes of variables will render the answer manifestly positive.

We begin by considering the case $n\geq 0$. In this situation, assume first $m$, $n$, $p$ are even, and~set
\begin{gather*}
F(t)=\left(\sqrt{1+2t}+\sqrt{1+6t}\right)^m {(1+2t)}^{-\frac{1}{2}} (1+6t)^{-\frac{1}{2}}.
\end{gather*}

\begin{Claim}\label{claim}
The series $F$ has positive coefficients up to order less or equal than $\frac{m}{2}-1$, and no terms of order between $\frac{m}{2}$ and $m-1$.
\end{Claim}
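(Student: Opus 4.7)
The plan is to split $F$ algebraically into a manifestly positive polynomial plus a remainder of high vanishing order, bypassing the alternating-sign issue flagged at the start of the proof. Set $a=\sqrt{1+2t}$ and $b=\sqrt{1+6t}$, so that $F=(a+b)^m/(ab)$. The key observation is the identity $(b-a)(a+b)=b^2-a^2=4t$, which for $m$ even gives $(b-a)^m=(4t)^m/(a+b)^m$, a power series that vanishes to order exactly $m$ at $t=0$. Rearranging $F$ around this identity produces
\[
F(t)=\frac{(a+b)^m-(b-a)^m}{ab}+\frac{(4t)^m}{ab(a+b)^m}.
\]

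Expanding the first piece by the binomial theorem, the even-index contributions cancel and only the odd-index ones survive, doubled:
\[
\frac{(a+b)^m-(b-a)^m}{ab}=2\sum_{\substack{k\text{ odd}\\ 1\leq k\leq m-1}}\binom{m}{k}(1+2t)^{(m-k-1)/2}(1+6t)^{(k-1)/2}.
\]
Since $m$ is even and $k$ is odd, the exponents $(m-k-1)/2$ and $(k-1)/2$ are non-negative integers summing to $m/2-1$. Hence each summand is a polynomial in $t$ of degree exactly $m/2-1$ with non-negative coefficients, and the $k=1$ summand alone, namely $2m(1+2t)^{m/2-1}$, contributes a strictly positive coefficient in every degree $0\leq n\leq m/2-1$.

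For the second piece, $1/[ab(a+b)^m]$ is analytic at $t=0$ with value $1/2^m$, so $(4t)^m/[ab(a+b)^m]=2^m t^m+O(t^{m+1})$ contributes nothing below degree $m$. Combining the two pieces: the coefficients of $F$ in degrees $0,\ldots,m/2-1$ are exactly those of the polynomial part, hence strictly positive; and the coefficients in degrees $m/2,\ldots,m-1$ vanish, which is the claim.

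The main difficulty here is conceptual rather than computational. The alternating signs in the Taylor expansions of $\sqrt{1+2t}$ and $\sqrt{1+6t}$ obstruct any naive term-by-term approach, so the crux is spotting that the parity-$m$ identity $(b-a)^m(a+b)^m=(4t)^m$ confines all the cancellation to a remainder of order $\geq m$ and leaves only an honest polynomial in the window $0\leq n\leq m-1$.
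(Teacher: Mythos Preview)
Your proof is correct and takes a genuinely different, more elementary route than the paper's. The paper establishes the claim by a residue computation: two successive rational substitutions (first $t\to s$, involving constants built from $\sqrt{3}$, then $s\to u$) convert $\operatorname{Res}_{t=0}F(t)\,t^{-k-1}\,\mathrm dt$ into the $u^k$-coefficient of a product of two binomial-type factors, whose positivity for $k\leq \tfrac{m}{2}-1$ and vanishing for $\tfrac{m}{2}\leq k\leq m-1$ can be read off directly from the signs of the exponents. Your argument instead exploits the algebraic identity $(b-a)(a+b)=4t$ to split off $(b-a)^m/(ab)$ as a remainder of order $\geq m$, leaving an explicit polynomial of degree $\tfrac{m}{2}-1$ with positive coefficients. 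This is shorter and more transparent for the specific claim, and the same splitting adapts with minor changes to the parity variants $(i)$--$(iii)$ that follow. The paper's substitution machinery, though heavier here, has the payoff of handling the full function $f(t)$ uniformly in the case $n<0$ at the end of the lemma, where no such polynomial decomposition is available.
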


We assume this for now. Note that
\begin{gather*}
f(t)=F(t) P(t), \quad P(t)=(1+2t)^{\frac{n}{2}} (1+6t)^{\frac{p}{2}}.
\end{gather*}
Clearly, $P$ is a polynomial of degree $\frac{n+p}{2}$ with positive coefficients. This observation together with Claim~\ref{claim} gives the argument. Indeed, let $k\leq \min\big(\frac{1}{2}(m+n+p)-1, m-1\big)$. Writing $F_i$ and $P_j$ for the coefficients of $F$ and $P$, we have
\begin{gather*}
\operatorname{Coeff}_{ t^k} f(t)=\sum F_iP_j, \qquad\text{the sum ranging over}\quad i+j=k, \quad i\geq 0,\quad 0\leq j\leq \frac{n+p}{2}.
\end{gather*}
The sum is nonnegative since $i\leq k\leq m-1$, so $F_i\geq 0$ by the Claim~\ref{claim}, and $P_j> 0$. The sum is in fact strictly positive. Indeed, for $k\geq \frac{n+p}{2}$, it contains the term
\begin{gather*}
F_{k-\frac{n+p}{2}} P_{\frac{n+p}{2}}>0.
\end{gather*}
The last statement is also a consequence of Claim~\ref{claim}, using that $k-\frac{n+p}{2}\leq\frac{m}{2}-1$. When $k<\frac{n+p}{2}$, the sum contains the term $F_0 P_k=2^m P_k>0$.

\begin{proof}[Proof of Claim~\ref{claim}]
We seek to show that for all $k\leq \frac{m}{2}-1$, the residue
\begin{gather*}
\operatorname{Res}_{t=0} \frac{F(t)}{t^{k+1}} \,{\mathrm dt}>0.
\end{gather*}
The peculiar change of variables
\begin{gather*}
t=\frac{s(s+1)}{2\sqrt{3}s+(2+\sqrt{3})}
\end{gather*}
will simplify the calculation.
For convenience, set
\begin{gather*}
a=\frac{2-\sqrt{3}}{2\sqrt{3}}>0\qquad\text{so that}\quad t=\frac{s(s+1)}{2\sqrt{3}(s+a+1)}.
\end{gather*}
We note the following identities
\begin{gather*}
\sqrt{1+2t}=\frac{s+\frac{\sqrt{3}+1}{2}}{3^{\frac{1}{4}} (s+a+1)^{\frac{1}{2}}}, \qquad \sqrt{1+6t}=\frac{3^{\frac{1}{4}}\big(s+\frac{\sqrt{3}+1}{2\sqrt{3}}\big)}{(s+a+1)^{\frac{1}{2}}},
\\
\sqrt{1+2t}+\sqrt{1+6t}=\frac{(\sqrt{3}+1)(s+1)}{3^{\frac{1}{4}}(s+a+1)^{\frac{1}{2}}},\qquad
{\mathrm dt}=\frac{\big(s+\frac{\sqrt{3}+1}{2}\big)\big(s+\frac{\sqrt{3}+1}{2\sqrt{3}}\big)}{2\sqrt{3} (s+a+1)^2}\,{\mathrm ds}.
\end{gather*}
From here, we obtain
\begin{gather*}
\operatorname{Res}_{t=0} \frac{F(t)}{t^{k+1}} \,{\mathrm dt}=\operatorname{Res}_{s=0} \frac{G(s)}{s^{k+1}} \,{\mathrm ds},
\end{gather*}
where
\begin{gather*}
G(s)=\alpha (s+1)^{m-k-1} (s+a+1)^{k-\frac{m}{2}}, \qquad
\alpha=2^k \big(\sqrt{3}+1\big)^{m} \sqrt{3}^{k-\frac{m}{2}}>0.
\end{gather*}
A second change of variables will be needed next (this change of variables could have been carried out simultaneously with the first, but this is a price worth paying for readability). We~set
\begin{gather*}
s=\frac{(a+1)^2}{a} \frac{u}{1-\frac{a+1}{a}u}.
\end{gather*}
The reader can verify that
\begin{gather*}
s+1=\frac{1+(a+1)u}{1-\frac{a+1}{a}u}, \qquad s+a+1=\frac{a+1}{1-\frac{a+1}{a}u}, \qquad
{\mathrm ds}=\frac{(a+1)^2}{a} \frac{1}{\big(1-\frac{a+1}{a}u\big)^2}\,{\mathrm du}.
\end{gather*}
By direct calculation, we find
\begin{gather*}
\operatorname{Res}_{s=0} \frac{G(s)}{s^{k+1}}\,{\mathrm ds}=\operatorname{Res}_{u=0} \frac{H(u)}{u^{k+1}}\,{\mathrm du}
\end{gather*}
for
\begin{gather*}
H(u)=\beta (1+(a+1)u)^{m-k-1} \biggl(1-\frac{a+1}{a}u\biggr)^{k-\frac{m}{2}}, \qquad \beta=\alpha a^{k} (a+1)^{-k-\frac{m}{2}}>0.
\end{gather*}
The first term is a polynomial with positive coefficients $a_i$ given by binomial numbers, for $0\leq i\leq m-k-1$. The second term also has positive coefficients
\begin{gather*}
b_j=\biggl(-\frac{a+1}{a}\biggr)^j\binom{k-\frac{m}{2}}{j}>0
\end{gather*}
since $k-\frac{m}{2}<0$. Thus
\begin{gather*}
\operatorname{Coeff}_{u^k} H(u)=\!\sum a_i b_j,\quad \text{the sum ranging over}\!\quad i+j=k, \quad 0\leq i\leq m-k-1, \quad j\geq 0.
\end{gather*}
This coefficient is positive since $a_i>0$, $b_j> 0$ and the sum is non-empty (the term $a_0b_k=b_k>0$ appears in the sum).

When $m$ is even, and $\frac{m}{2}\leq k\leq m-1$, the expression $H(u)$ is a polynomial in $u$ of degree $\frac{m}{2}-1<k$. Hence, the coefficient of $u^k$ in $H(u)$ vanishes.
This proves the claim (and completes the argument when $m$, $n$, $p$ are even and $n\geq 0$).
\end{proof}

When $n\geq 0$, there are three other cases to consider, which require different choices for $F$ and $P$:
\begin{itemize}\itemsep=0pt
\item [$(i)$] when $m$ even, $n$, $p$ odd, we set
\begin{gather*}
F(t)=\left(\sqrt{1+2t}+\sqrt{1+6t}\right)^m, \qquad
P(t)=(1+2t)^{\frac{n-1}{2}} (1+6t)^{\frac{p-1}{2}},
\end{gather*}
\item [$(ii)$] when $m$ odd, $n$ even, $p$ odd, we set
\begin{gather*}
F(t)=\left(\sqrt{1+2t}+\sqrt{1+6t}\right)^m {(1+2t)}^{-\frac{1}{2}}, \qquad
P(t)=(1+2t)^{\frac{n}{2}} (1+6t)^{\frac{p-1}{2}},
\end{gather*}
\item [$(iii)$] when $m$ odd, $n$ odd, $p$ even, we set
\begin{gather*}
F(t)=\left(\sqrt{1+2t}+\sqrt{1+6t}\right)^m (1+6t)^{-\frac{1}{2}}, \qquad
P(t)=(1+2t)^{\frac{n-1}{2}} (1+6t)^{\frac{p}{2}}.
\end{gather*}
\end{itemize}
In case $(i)$, $F$ has positive coefficients up to order $\frac{m}{2}$, and no terms of order between $\frac{m}{2}+1$ and $m-1$.
In cases $(ii)$ and~$(iii)$, $F$ has positive coefficients up to order $\frac{m-1}{2}$, and no terms of order between $\frac{m+1}{2}$ and $m-1$. The arguments are similar, and we leave the details to the reader.

When $n<0$, the reasoning used to prove Claim~\ref{claim} also works to deal directly with the function
\begin{gather*}
f(t)=\left(\sqrt{1+2t}+\sqrt{1+6t}\right)^m {(1+2t)}^{\frac{n-1}{2}} (1+6t)^{\frac{p-1}{2}}.
\end{gather*}
Following exactly the same steps, first changing from $t$ to $s$ and then from $s$ to $u$, we obtain
\begin{align*}
H(u)={}&\gamma (1+(a+1)u)^{m-k-1} \biggl(1-\frac{a+1}{a}u\biggr)^{k-\frac{m+n+p}{2}} \biggl(1-\frac{(\sqrt{3}+1)^3}{4\sqrt{3}}u\biggr)^{n}
\\
&\times \biggl(1+\frac{\big(\sqrt{3}+1\big)^3}{4}u\biggr)^p,
\end{align*}
for $\gamma>0$. By the same arguments, this has positive coefficients when the exponents
\begin{gather*}
m-k-1\geq 0, \qquad k-\frac{m+n+p}{2}<0, \qquad n<0, \qquad p\geq 0,
\end{gather*}
which we assumed.
\end{proof}

\subsection[K3 blowups]{$\boldsymbol{K3}$ blowups}
The top Segre classes computed by formula \eqref{lehnc} are always coefficients in series of the form~$f(t)$ as in Lemma \ref{cl}, barring the condition $m, p\geq 0$. When this condition is satisfied, we easily obtain big and nef criteria for the tautological bundles $L^{[k]}\to X^{[k]}$.

There are several specific examples where our techniques apply. We illustrate them first when
\begin{gather*}
\pi\colon\ X\to S
\end{gather*}
is the blowup of a $K3$ surface $S$ at a point $p\in S$. We assume
$S$ has Picard rank $1$, with ample Picard generator $H$. Let $H^2=2h$.

\begin{Theorem}\label{blok}
Let $E$ be the exceptional divisor on $X$, and set $L=H-\ell E$. Assume $\ell\geq k-1$ and
\begin{gather*}
2h>\max \bigl((\ell+2)^2-6, (\ell+1)^2+4k, \ell(\ell+1)+6k-6\bigr).
\end{gather*}
Then $L^{[k]}$ is big and nef on $X^{[k]}$.
\end{Theorem}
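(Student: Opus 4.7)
The proof divides into two parts: (i) verifying $(k-1)$-very ampleness of $L$, which gives nefness of $L^{[k]}$, and (ii) verifying positivity of the top Segre integral $\int_{X^{[k]}} s_{2k}\big(L^{[k]}\big)$, which then yields bigness via \eqref{pos}.

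For (ii), the numerical invariants on the blow-up are $K_X=E$ with $E^2=-1$, $H\cdot E=0$ and $\chi(\mathcal O_X)=2$, so that
\begin{gather*}
L^2 = 2h-\ell^2,\qquad L\cdot K_X = \ell,\qquad K_X^2 = -1.
\end{gather*}
Substituting into Lehn's formula \eqref{lehnc} and collecting exponents gives
\begin{gather*}
\sum_{k\geq 0} z^k \int_{X^{[k]}} s\big(L^{[k]}\big) = 2^{-\ell-2}\bigl(\sqrt{1+2t}+\sqrt{1+6t}\bigr)^{\ell+2}(1+2t)^{(2h-\ell^2-2\ell+5)/2}(1+6t)^{-3/2}.
\end{gather*}
Extracting the coefficient of $z^k$ via $z=t(1+2t)^2$ amounts to multiplying by $(1+6t)/(1+2t)^{2k+1}$ and reading off the coefficient of $t^k$, which produces precisely the form in Lemma~\ref{cl} with $m=\ell+2$, $n=2h-\ell^2-2\ell+4-4k$ and $p=0$. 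One checks $m\geq 0$, $p\geq 0$ and $m+n+p = 2h-\ell(\ell+1)+6-4k$ is even; the hypothesis $2h>(\ell+1)^2+4k$ gives $n\geq 6>0$, so Lemma~\ref{cl} applies. Positivity then follows from $k\leq \min\bigl(\tfrac{m+n+p}{2}-1,\,m-1\bigr)$: the bound $k\leq m-1$ is the hypothesis $\ell\geq k-1$, and $k\leq \tfrac{m+n+p}{2}-1$ rearranges (using that all quantities involved are integers of the same parity) to $2h>\ell(\ell+1)+6k-6$.

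For (i), apply the adjoint version of the Beltrametti--Sommese criterion from Remark~\ref{remk}. Write $L=K_X+L'$ with $L'=H-(\ell+1)E$, so $L'^2 = 2h-(\ell+1)^2 > 4k$ by hypothesis. It remains to verify that $L'$ is nef on $X$ and to rule out effective divisors $D=aH-bE$ obstructing \eqref{kva}. The inequality $2h>(\ell+2)^2-6$ is tailored so that the delicate borderline configurations (notably those involving $E$ and its small linear combinations with $H$) are excluded.

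The main obstacle is this last step. Compared to the pure $K3$ case, the blow-up supplies additional effective classes, namely $E$ itself and proper transforms of singular curves on $S$ through $p$, and each potential obstruction to nefness of $L'$ or to the Beltrametti--Sommese inequalities must be ruled out separately. The three numerical inequalities in the hypothesis are calibrated precisely so that this case-by-case verification goes through.
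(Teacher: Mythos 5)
Your second half (positivity of the top Segre integral) is correct and is essentially identical to the paper's argument: after substituting $z=t(1+2t)^2$ you land exactly on the series of Lemma~\ref{cl} with $m=\ell+2$, $n=2h-\ell^2-2\ell+4-4k$, $p=0$, and the constraints $k\le m-1$ and $k\le\frac{1}{2}(m+n+p)-1$ translate, as in the paper, into $\ell\ge k-1$ and $2h>\ell(\ell+1)+6k-6$. (Incidentally, $n>0$ is not needed for the lemma; in the paper the hypothesis $2h>(\ell+1)^2+4k$ is used instead to get $M^2>4k$ in the very ampleness step.)

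The first half, however, has a genuine gap: you name the right strategy (adjoint Beltrametti--Sommese with $L=K_X+M$, $M=H-(\ell+1)E$) but leave undone the two steps that actually require work. First, nefness of $M$ is not automatic: on the blowup of a Picard-rank-one $K3$ it is precisely the Seshadri-constant statement $\epsilon(S,p)\ge\ell+1$, which the paper deduces from Knutsen's bounds \cite{K} using $2h>\ell(\ell+3)$ (a consequence of the displayed maximum); you never address this. Second, the obstructing divisors of \eqref{kva} are not ``ruled out by calibration'': using effectivity of $D$, $\mathbb Q$-effectivity of $M-2D$, and $\operatorname{Pic}(S)=\mathbb Z\langle H\rangle$, one finds $D=bE$ and then $b=1$ from $D.M/2<k$ and $k\le\ell+1$, so the only candidate is $D=E$; and this case is handled not numerically but cohomologically, by showing no length $\le k$ subscheme $\zeta\subset E$ obstructs: the map $H^0(L)\to H^0(L\otimes\mathcal O_\zeta)$ factors through $H^0(L|_E)=H^0(\mathcal O_{\mathbb P^1}(\ell))$, the second arrow is surjective since $k\le\ell+1$, and the first is surjective because $H^1(L(-E))=0$, which is Voisin's vanishing \cite[Proposition~4.1]{V} and is exactly where the hypothesis $2h>(\ell+2)^2-6$ enters. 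Your description of that inequality as excluding ``borderline configurations'' misidentifies its role, and the ``proper transforms of singular curves through $p$'' you worry about never appear, since the Picard-rank-one assumption eliminates all classes $aH+bE$ with $a\ge1$ at the $\mathbb Q$-effectivity step. As written, the $(k-1)$-very ampleness of $L$ --- hence global generation and nefness of $L^{[k]}$ --- is asserted rather than proved.
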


\begin{proof} We first show $H-(\ell+1) E$ nef. Recall the Sheshadri constant
\begin{gather*}
\epsilon (S, p)= \max \{t \in \mathbb R_{\geq 0}\colon H-tE \text{ is nef}\},
\end{gather*}
see for instance \cite[Definition 5.1.1]{L-I}. Note that since $H$ is in the nef cone of $X$, if $H-tE$ is nef, then $H-t'E$ is also nef for all $0\leq t'\leq t$. Thus it suffices to explain that
\begin{gather*}
\epsilon(S, p)\geq \ell+1.
\end{gather*}
The Sheshadri constants of $K3$ surfaces of Picard rank $1$ have been studied in \cite{K} and shown to satisfy
\begin{gather*}
\epsilon(S, p)\geq \big\lfloor {\sqrt{H^2}}\big\rfloor,
\end{gather*}
with two possible exceptions
\begin{gather*}
H^2=\alpha^2+\alpha-2,\qquad \epsilon(S, p)\geq \alpha - \frac{2}{\alpha+1}
\end{gather*}
and
\begin{gather*}
H^2=\alpha^2+\frac{\alpha-1}{2},\qquad \epsilon(S, p)\geq \alpha -\frac{1}{2\alpha+1}, \qquad \alpha\in \mathbb Z_{>0}.
\end{gather*}
Since the inequality $2h>\ell(\ell+3)$ is implied by our hypothesis, it follows that $\epsilon(S, p)\geq \ell+1$ in all cases (using $\alpha\geq \ell+2$ in the two exceptional cases).

We next show that $L$ is $(k-1)$-very ample. Note that $K_X=E$, and write
\begin{gather*}
L=K_X+M, \qquad M=H-(\ell+1)E.
\end{gather*}
Observe that $M$ is nef by the first paragraph of the proof, and
\begin{gather*}
M^2=2h-(\ell+1)^2>4k.
\end{gather*}
By \cite[Theorem 2.1]{BS}, if $L$ is not $(k-1)$-very ample, there exists an effective divisor $D\neq 0$ such that
\begin{gather*}
D.M-k\leq D^2<\frac{D.M}{2}<k.
\end{gather*}
Furthermore, $M-2D$ is $\mathbb Q$-effective and $D$ contains a subscheme $\zeta$ of length at most
equal to~$k$ such that
\begin{gather*}
H^0(L)\to H^0(L\otimes \mathcal O_\zeta)
\end{gather*}
is not surjective. Write \begin{gather*}D=aH+bE,
\end{gather*}
and note that $D$ effective implies $a\geq 0$. Similarly,
\begin{gather*}
M-2D=(1-2a)H+(-\ell-1-2b)E
\end{gather*}
is $\mathbb Q$-effective, so $1-2a\geq 0$. Thus $a=0$, $D=bE$ with $b > 0$. We have
\begin{gather*}
\frac{D.M}{2}<k\implies b(\ell+1)<2k\implies b<\frac{2k}{\ell+1}\leq 2
\end{gather*}
since $k\leq \ell+1$. Hence $b=1$ and $D=E$. For subschemes $\zeta$ of $E$, the map
\begin{gather*}
H^0(L)\to H^0(L\otimes \mathcal O_\zeta)
\end{gather*}
can be written as composition
\begin{gather*}
H^0(L)\to H^0(L|_{E}), \qquad H^0(L|_E)\to H^0(L\otimes \mathcal O_{\zeta}).
\end{gather*}
Since $L|_{E}=\mathcal O_E(\ell)$ and $\zeta$ has length less or equal to $k\leq \ell+1$, the second map is clearly surjective. The first map is also surjective since
\begin{gather*}
H^1(L(-E))=0.
\end{gather*}
This is a consequence of \cite[Proposition 4.1]{V} and requires the bound $2h>(\ell+2)^2-6$, which we assumed. We conclude $L^{[k]}$ is globally generated and thus nef.

It remains to explain that the top Segre class of $L^{[k]}$ is positive. We use \eqref{lehnc} and we change variables
\begin{gather*}
z=t(1+2t)^2\qquad \text{so that}\quad \mathrm dz=(1+2t)(1+6t)\,{\mathrm dt}.
\end{gather*}
Then
\begin{align}
\int_{X^{[k]}} s\big(L^{[k]}\big)&=\operatorname{Res}_{z=0} A_1^{2h-\ell^2} A_2^2 A_3^{\ell} A_4^{-1} \frac{\mathrm dz}{z^{k+1}}\nonumber
\\
\nonumber&=
\operatorname{Res}_{t=0} 2^{-\ell-2} (1+2t)^{h-\frac{\ell^2}{2}-2k-\ell+\frac{3}{2}} (1+6t)^{-\frac{1}{2}} \left(\sqrt{1+2t}+\sqrt{1+6t}\right)^{\ell+2}\frac{\mathrm dt}{t^{k+1}}
\\
&= 2^{-\ell-2} \operatorname{Coeff}_{t^k} (1\!+2t)^{h-\frac{\ell^2}{2}-2k-\ell+\frac{3}{2}} (1\!+6t)^{-\frac{1}{2}} \left(\sqrt{1+2t}\!+\sqrt{1+6t}\right)^{\ell+2}\!.
\label{cvvv}
\end{align}
We are now in the situation considered in Lemma \ref{cl}. Thus, the coefficient above is positive provided $k\leq \ell+1$ and
\begin{gather*}
k\leq \frac{1}{2} \bigg((\ell+2)+2\bigg(h-\frac{\ell^2}{2}-2k-\ell+2\bigg)\bigg)-1\iff 2h> \ell(\ell+1)+6k-6.
\end{gather*}
This completes the proof.
\end{proof}

\subsection{Surfaces of general type} It is natural to wonder how far these techniques take us. We show

\begin{Theorem} \label{gtyp}Assume $X$ is a smooth projective minimal surface of general type. Let $L$ be a~$(k-1)$-very ample line bundle such that
\begin{gather*}
\chi(L)\geq 3k, \qquad L.K_X\geq 2K_X^2+k+1.
\end{gather*}
Then $L^{[k]}$ is big and nef over $X^{[k]}$.
\end{Theorem}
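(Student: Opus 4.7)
The plan is to combine global generation (from $(k-1)$-very ampleness) with a residue computation based on Lehn's formula~\eqref{lehnc}, cast in a form suitable for Lemma~\ref{cl}. Since $L$ is $(k-1)$-very ample, $L^{[k]}$ is globally generated and hence nef, so by the criterion~\eqref{pos} bigness reduces to $\int_{X^{[k]}} s_{2k}\bigl(L^{[k]}\bigr) > 0$. I will extract this integral from~\eqref{lehnc} via the change of variables $z = t(1+2t)^2$, for which $\mathrm dz = (1+2t)(1+6t)\,\mathrm dt$ and $z^{k+1} = t^{k+1}(1+2t)^{2k+2}$.

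Plugging in the explicit expressions for $A_1,\ldots,A_4$, collecting exponents of $1+2t$, $1+6t$, and $\sqrt{1+2t}+\sqrt{1+6t}$, and using Riemann--Roch $\chi(L) = \chi(\mathcal O_X) + \tfrac{1}{2}(L^2 - L.K_X)$ to eliminate $L^2$, a direct manipulation in the spirit of~\eqref{cvvv} in the proof of Theorem~\ref{blok} yields
\begin{equation*}
\int_{X^{[k]}} s_{2k}\bigl(L^{[k]}\bigr) = 2^{2K_X^2 - L.K_X}\,\operatorname{Coeff}_{t^k} f(t),
\end{equation*}
where $f(t) = \bigl(\sqrt{1+2t}+\sqrt{1+6t}\bigr)^m (1+2t)^{(n-1)/2}(1+6t)^{(p-1)/2}$ is exactly the series from Lemma~\ref{cl}, with parameters
\begin{equation*}
m = L.K_X - 2K_X^2,\qquad p = K_X^2 + 3 - \chi(\mathcal O_X),\qquad m+n+p = 2\chi(L) - 4k + 2.
\end{equation*}

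Next I verify the hypotheses of Lemma~\ref{cl}. Parity of $m+n+p$ is automatic. The hypothesis $L.K_X \geq 2K_X^2 + k + 1$ yields $m \geq k+1 \geq 1$, hence $m \geq 0$ and $k \leq m-1$; the hypothesis $\chi(L) \geq 3k$ yields $\tfrac{1}{2}(m+n+p) - 1 = \chi(L) - 2k \geq k$. The remaining condition $p \geq 0$, equivalently $\chi(\mathcal O_X) \leq K_X^2 + 3$, is where the minimal/general-type hypothesis enters crucially: Noether's inequality $p_g \leq \tfrac{1}{2}K_X^2 + 2$ for minimal surfaces of general type gives
\begin{equation*}
\chi(\mathcal O_X) = 1 - q + p_g \leq 1 + p_g \leq \tfrac{1}{2}K_X^2 + 3 \leq K_X^2 + 3,
\end{equation*}
the last step using $K_X^2 \geq 1$ (since $K_X$ is nef and big). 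Lemma~\ref{cl} now delivers $\operatorname{Coeff}_{t^k} f(t) > 0$, so the Segre integral is positive and $L^{[k]}$ is big.

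The main obstacle is securing $p \geq 0$: the remark following Lemma~\ref{cl} explicitly warns that this hypothesis can fail for arbitrary surfaces, so a genuinely surface-theoretic input (here Noether's inequality, which uses both minimality and general type) is unavoidable. The numerical hypothesis $L.K_X \geq 2K_X^2 + k + 1$ is then calibrated precisely to secure $k \leq m - 1$, i.e., that $L$ pairs sufficiently positively with $K_X$ relative to $K_X^2$; all remaining bookkeeping---unpacking the change of variables and matching parameters between Lehn's series and Lemma~\ref{cl}---is mechanical.
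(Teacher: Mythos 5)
Your proposal is correct and follows essentially the same route as the paper: express the Segre integral via Lehn's formula \eqref{lehnc} and the substitution $z=t(1+2t)^2$ as $2^{-m}\operatorname{Coeff}_{t^k}f(t)$ with exactly the parameters $m=L.K_X-2K_X^2$, $p=K_X^2-\chi(\mathcal O_X)+3$, $m+n+p=2\chi(L)-4k+2$, secure $p\geq 0$ from Noether's inequality together with $K_X^2>0$, and conclude by Lemma \ref{cl} since the numerical hypotheses are precisely $k\leq\min\bigl(\tfrac{1}{2}(m+n+p)-1,\,m-1\bigr)$. The only differences are cosmetic (your slightly different arithmetic for $p\geq 0$ and the explicit mention of nefness, which the paper leaves to the general discussion in Section \ref{seck3}).
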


\begin{proof}
Arguing as in the proof of Theorem \ref{blok}, in particular equation \eqref{cvvv}, we express the Segre integral as the $t^k$-coefficient in the series
\begin{gather*}
f(t)=2^{-m} \left(\sqrt{1+2t}+\sqrt{1+6t}\right)^m {(1+2t)}^{\frac{n-1}{2}} (1+6t)^{\frac{p-1}{2}},
\end{gather*}
where
\begin{gather*}
m=L.K-2K^2, \qquad n=(L-K)^2+3\chi - 4k - 1, \qquad p=K^2-\chi+3,
\end{gather*}
with $K=K_X$ and $\chi=\chi(\mathcal O_X)$. Note that
\begin{gather*}
m+n+p=2\chi(L)-4k+2
\end{gather*}
is even. Furthermore, $p\geq 0$. Indeed, let $p_g$, $q$ denote the genus and irregularity of $X$. By~Noet\-her's inequality $K^2\geq 2p_g-4$, and $K^2>0$ since $X$ is minimal of general type. Averaging, we obtain $K^2>p_g-2$, and thus
\begin{gather*}
p=K^2-\chi+3>(p_g-2)-(1-q+p_g)+3=q\geq 0.
\end{gather*}
By Lemma \ref{cl}, the coefficients of
$f(t)$ up to order $\min\big(\frac{1}{2} (m+n+p)-1, m-1\big)$ are positive. In~particular, if
\begin{gather*}
k\leq \min\bigg(\frac{1}{2} (m+n+p)-1, m-1\bigg),
\end{gather*}
then the Segre integral is positive. The latter inequality is exactly our hypothesis, and thus $L^{[k]}$ is big and nef.
\end{proof}

\section{Curves} \label{seccur} Let $C$ be a smooth projective curve of genus $g$, and let $V\to C$ be a vector bundle with $\chi=\chi(V)$. It is natural to ask whether the previous results also apply to the tautological bundles $V^{[k]}\to C^{[k]}$. In fact, by similar considerations, we establish an analogue of Theorem~\ref{tt}:

\begin{Proposition} \label{tt3}Assume $V\to C$ is a $(k-1)$-very ample rank $r$ vector bundle with $\chi \geq (r+1)k$.
Then $V^{[k]}\to C^{[k]}$ is big and nef over $C^{[k]}$.
\end{Proposition}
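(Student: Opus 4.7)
The plan is to mirror the strategy of Theorem~\ref{tt}, taking advantage of the fact that $C^{[k]}$ is smooth of dimension $k$ (it is just the symmetric product $\sym^k C$). First, the $(k-1)$-very ampleness hypothesis says that the evaluation $H^0(C, V) \to H^0(C, V \otimes \mathcal O_\zeta)$ is surjective for every length-$k$ subscheme $\zeta \subset C$. By the standard cohomology-and-base-change argument recalled at the start of Section~\ref{seck3}, this is equivalent to global generation of $V^{[k]} \to C^{[k]}$, which in particular gives nefness. By~\eqref{pos}, to conclude that $V^{[k]}$ is big it then suffices to verify the positivity of the top Segre integral
\[
\int_{C^{[k]}} s_k\bigl(V^{[k]}\bigr) > 0.
\]

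Next I would invoke the closed-form Segre integral formula on $C^{[k]}$, which is a curve analogue of \eqref{eq1} and is a special case of the Quot-scheme series of \cite{MOP1,OP2021}. On a curve only one universal series survives (there is no $c_1^2(F)$ or $c_2(F)$, only $\chi$), and by a residue calculation parallel to \cite[p.~11]{MOP} one obtains an expression of the form
\[
\int_{C^{[k]}} s_k\bigl(V^{[k]}\bigr) = \operatorname{Coeff}_{t^k}\bigl[(1+(1+r)t)^{\chi - (r+1)k}\bigr] \cdot (\text{polynomial correction})
\]
after the appropriate change of variables. The correction factor (if any) will be a polynomial with positive coefficients, just as the $(1+(1+r)(2+r)t)$ factor for abelian/bielliptic surfaces is.

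The final step is the same trick used throughout Sections~\ref{seck3} and~\ref{secktriv}: substitute $t = u/(1-(1+r)u)$ to rewrite the residue as
\[
\int_{C^{[k]}} s_k\bigl(V^{[k]}\bigr) = \operatorname{Coeff}_{u^k}\bigl[(1-(1+r)u)^{-\chi + (r+1)k + k - 1} \cdot (\text{positive polynomial})\bigr].
\]
Under the hypothesis $\chi \geq (r+1)k$ the exponent $-\chi + (r+1)k + k - 1$ is at most $k-1$, and more importantly $-\chi + (r+1)k - 1 < 0$, so every coefficient $b_j = (-1)^j (1+r)^j \binom{-\chi + (r+1)k + k - 1}{j}$ in the expansion of the first factor is strictly positive. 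Writing the answer as a finite sum $\sum a_i b_j$ with $i+j = k$, each term is positive and the sum is non-empty, giving $\int_{C^{[k]}} s_k(V^{[k]}) > 0$ and hence bigness.

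The main obstacle is the bookkeeping for the precise form of the Segre series on $C^{[k]}$ and its residue identity; once that identity is written in the $(1+(1+r)t)$-normalized form analogous to \eqref{e1}, the positivity argument is routine and identical in spirit to the one in Theorem~\ref{tt}, with the slightly weaker threshold $(r+1)k$ (rather than $(r+2)k$) reflecting the one-dimensional geometry and the absence of the Mukai-pairing shift $\delta$.
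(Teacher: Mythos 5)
The nefness half of your argument is fine and matches the paper: $(k-1)$-very ampleness gives global generation of $V^{[k]}$, hence nefness. But your reduction of bigness has a genuine problem: you dropped the sign in \eqref{pos}. Here $\dim C^{[k]}=k$, so the criterion is $(-1)^k\int_{C^{[k]}} s\big(V^{[k]}\big)>0$, not $\int_{C^{[k]}} s\big(V^{[k]}\big)>0$. This is not cosmetic. The curve Segre series of \cite[Theorem~2]{MOP1} is written in the variable $z=-t(1+t)^r$, and under the hypothesis the unsigned top Segre integral has sign $(-1)^k$; so for odd $k$ the statement you set out to prove is false, and no completion of your sketch could establish it.

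Second, the ``bookkeeping'' you deferred is exactly where the numerical threshold is decided, and your guessed identity gets it wrong. The actual formula is $\sum_k z^k\int_{C^{[k]}} s_k\big(V^{[k]}\big)=A_1^{d}A_2^{1-g}$ with $A_1=1+t$, $A_2=(1+t)^{r+1}/(1+(1+r)t)$, $z=-t(1+t)^r$; taking residues gives $(-1)^k\int_{C^{[k]}} s_k\big(V^{[k]}\big)=\operatorname{Coeff}_{t^k}\big[(1+t)^{\chi-kr-g}(1+(1+r)t)^{g}\big]$. Note that the genus appears (so it is not true that ``only $\chi$ survives''), and the $\chi$-dependent exponent sits on $(1+t)$, not on $(1+(1+r)t)$. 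Consequently the right substitution is $t=u/(1-u)$, not $t=u/(1-(1+r)u)$, and it yields $\operatorname{Coeff}_{u^k}\big[(1+ru)^{g}(1-u)^{-\chi+k(r+1)-1}\big]$, which is positive precisely when $\chi\geq (r+1)k$, by the argument of Theorem~\ref{tt}. In your version the exponent after substitution is $-\chi+(r+2)k-1$, whose negativity would need the stronger bound $\chi\geq (r+2)k$; you then justify positivity of the coefficients $b_j$ by the negativity of a different quantity, $-\chi+(r+1)k-1$, so your final positivity step is internally inconsistent and does not go through as written. With the correct sign and the correct residue identity the argument is indeed the routine one you envisage, but those two points are the substance of the proof, not bookkeeping.
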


To illustrate, if $V$ is stable of degree $d>r(2g-2+k)$, then $V$ is $(k-1)$-very ample. This holds because for any divisor $Z\subset C$ of degree $k$, we have
\begin{gather*}
H^1(V(-Z))=H^0\big(V^{\vee}\otimes K_C(Z)\big)=0
\end{gather*}
by Serre duality, stability and the assumption $\mu(V)>2g-2+k$.

\begin{proof}
Since $V$ is $(k-1)$-very ample, it follows that $V^{[k]}$ is globally generated, hence nef. To~prove $V^{[k]}$ is big, it suffices to verify
\begin{gather*}
(-1)^k \int_{C^{[k]}} s\big(V^{[k]}\big)>0.
\end{gather*}
The latter integrals are computed in \cite[Theorem 2]{MOP1}; they can be viewed as higher rank analogues of the classical $k$-secant integrals. The answer bears analogies with equation \eqref{eq1}:
\begin{gather*}
\sum_{k=0}^{\infty} z^k \int_{C^{[k]}} s_{k} \big(V^{[k]}\big) =A_1^{d} A_2^{1-g},
\end{gather*}
where
\begin{gather*}
A_1(z)=1+t, \qquad A_2(z)=\frac{(1+t)^{r+1}}{1+(1+r)t}, \qquad z=-t(1+t)^r.
\end{gather*}
We can express the Segre integrals as residues
\begin{align*}
(-1)^k \int_{C^{[k]}} s_{k} \big(V^{[k]}\big)&=(-1)^k \operatorname{Res}_{z=0} A_1^d A_2^{1-g} \frac{\mathrm dz}{z^{k+1}}
\\
&=\operatorname{Res}_{t=0} (1+t)^{\chi - kr - g} (1+(1+r)t)^{g} \frac{\mathrm dt}{t^{k+1}}.
\end{align*}
Just as in Theorem \ref{tt}, we further change variables $t=\frac{u}{1-u}$, so that
\begin{align*}
(-1)^k \int_{C^{[k]}} s_{k} \big(V^{[k]}\big)&=\operatorname{Res}_{u=0} (1+ru)^{g} (1-u)^{-\chi+k(r+1)-1} \frac{\mathrm du}{u^{k+1}}
\\
&= \operatorname{Coeff}_{u^k} (1+ru)^{g} (1-u)^{-\chi+k(r+1)-1}.
\end{align*}
By the same reasoning as in Theorem~\ref{tt}, we see that this coefficient is positive when $-\chi+k(r+1)-1<0\iff \chi\geq k(r+1)$. This completes the argument.
\end{proof}

To go further, we consider the punctual Quot schemes $\mathsf {Quot}_C\big(\mathbb C^N, k\big)$ parametrizing quotients
\begin{gather*}
0\to S\to \mathbb C^N\otimes \mathcal O_C\to Q\to 0, \qquad \rk Q=0, \qquad \text{length }Q=k.
\end{gather*}
These are smooth projective varieties of dimension $Nk$, and carry tautological vector bundles~$V^{[k]}$ for each vector bundle $V\to C$:
\begin{gather*}
V^{[k]}=p_{\star} (\mathcal Q\otimes q^{\star} V).
\end{gather*}
Here $\mathcal Q$ is the universal quotient and $p$, $q$ are the two projections over $\mathsf{Quot}_C\big(\mathbb C^N, k\big)\times C$.
The associated Segre integrals were studied in~\cite{OP2021}. We extend the above results to the punctual Quot scheme, in rank $1$. The higher rank case appears more involved.

\begin{Theorem} \label{quotpun}
Let $L$ be a line bundle with $\chi (L)\geq k+g$ and $\chi(L)\geq k\big(1+\frac{1}{N}\big)$. Then, the vector bundle $L^{[k]}$ is big and nef over $\mathsf {Quot}_C\big(\mathbb C^N, k\big)$.
\end{Theorem}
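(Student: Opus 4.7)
The plan is to follow the template of Proposition \ref{tt3} and adapt it to the Quot scheme setting, splitting the argument into (a) global generation of $L^{[k]}$, which gives nefness, and (b) positivity of the top Segre integral $\int s_{Nk}(L^{[k]})$ with the correct sign $(-1)^{Nk}$, which together with (a) gives bigness via the criterion \eqref{pos}.

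For (a), I would work with the universal exact sequence
\begin{equation*}
0 \to \mathcal{S} \to \mathbb{C}^N \otimes \mathcal{O} \to \mathcal{Q} \to 0
\end{equation*}
on $\mathsf{Quot}_C(\mathbb{C}^N, k) \times C$. Tensoring with $q^\star L$ and pushing forward by $p$ produces an evaluation morphism $\mathbb{C}^N \otimes H^0(C, L) \otimes \mathcal{O} \to L^{[k]}$ whose cokernel is controlled by $R^1 p_\star(\mathcal{S} \otimes q^\star L)$. Fiberwise, vanishing of this $R^1$ amounts to $H^1(C, L \otimes S) = 0$ for every rank-$N$ kernel $S$ arising from a length-$k$ quotient. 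Splicing into the sequence $0 \to L \otimes S \to \mathbb{C}^N \otimes L \to L \otimes Q \to 0$ on $C$, this reduces to $H^1(L) = 0$ together with surjectivity of $\mathbb{C}^N \otimes H^0(L) \to H^0(L \otimes Q)$ for every such $Q$; both follow once $L$ is $(k-1)$-very ample, which is ensured by $\deg L \geq 2g + k - 1$, i.e., $\chi(L) \geq k + g$. Thus $L^{[k]}$ is globally generated and hence nef under the first hypothesis.

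For (b), I would invoke the closed form for the top Segre generating series over punctual Quot schemes from \cite{OP2021}. In rank one it takes the shape of an algebraic function $B_1(z)^{\deg L}\, B_2(z)^{1-g}$ in a parameter $t$ with a rational substitution $z = \phi(t)$ depending on $N$, in direct analogy with the curve formula recalled in Proposition \ref{tt3}. Extracting $\int s_{Nk}(L^{[k]})$ as a residue at $t = 0$ and then performing a change of variables of the form $t = u/(1 - c u)$ with $c$ determined by $N$, I expect the residue to transform into a coefficient extraction
\begin{equation*}
\operatorname{Coeff}_{u^k} \bigl[(1 + \alpha u)^{g}\,(1 - \beta u)^{-\gamma}\bigr],
\end{equation*}
with $\alpha,\beta > 0$ and $\gamma$ a linear combination of $\chi(L)$, $k$, $N$. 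Under $\chi(L) \geq k(1 + 1/N)$, the exponent $\gamma$ should be strictly positive, so $(1 - \beta u)^{-\gamma}$ expands with positive coefficients; the factor $(1 + \alpha u)^g$ is a polynomial with positive coefficients. Hence the $u^k$-coefficient is a sum of manifestly positive terms and is non-zero, yielding the required positivity.

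The principal obstacle is pinning down the correct change of variables from \cite{OP2021} and tracking the $N$-dependent exponents so that both hypotheses play their designated roles: $\chi(L) \geq k + g$ secures global generation through $(k-1)$-very ampleness, while $\chi(L) \geq k(1 + 1/N)$ is precisely the bound that flips the exponent $\gamma$ into the positive range, exactly as $\chi \geq (r+1)k$ does in Proposition \ref{tt3}. Matching these two numerical thresholds cleanly to the residue calculation is where the care is needed; the higher-rank case, where a similar argument would naturally require an additional very-ampleness hypothesis and a more involved exponent analysis, is why the theorem is stated only in rank one.
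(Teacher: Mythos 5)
Your overall architecture matches the paper's: global generation via the universal sequence gives nefness, and positivity of the signed top Segre integral gives bigness. But both halves have gaps, and the second is the serious one. For bigness you never actually obtain the Segre integral: you posit that the rank-one Quot-scheme series of \cite{OP2021} has the shape $B_1(z)^{\deg L}B_2(z)^{1-g}$ with an unspecified $N$-dependent substitution, and that after an unspecified change of variables the answer becomes $\operatorname{Coeff}_{u^k}\big[(1+\alpha u)^{g}(1-\beta u)^{-\gamma}\big]$ with $\gamma>0$ exactly when $\chi(L)\geq k\big(1+\frac{1}{N}\big)$. That is the statement to be proved, not a proof; nothing in your write-up pins down the substitution or the exponents, and you concede as much. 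The paper avoids any new computation: by \cite[Corollary~10]{OP2021} one has the symmetry
\begin{gather*}
(-1)^{Nk}\int_{\mathsf{Quot}_C(\mathbb C^N, k)} s\big(L^{[k]}\big)=(-1)^k\int_{C^{[k]}} s\big(\big(L^{\oplus N}\big)^{[k]}\big),
\end{gather*}
so the Quot integral is literally the curve integral already treated in Proposition~\ref{tt3}, applied to the rank-$N$ bundle $L^{\oplus N}$, and positivity holds when $N\chi(L)\geq (N+1)k$, i.e., $\chi(L)\geq k\big(1+\frac{1}{N}\big)$. Your ``expected'' coefficient form is in fact what this route produces, with $\alpha=N$, $\beta=1$, $\gamma=N\chi(L)-(N+1)k+1$, but you have not derived it; if you wish to bypass the symmetry you must actually quote and manipulate the $N$-dependent algebraic function of \cite{OP2021}, not Proposition~\ref{tt3}'s formula with relabelled exponents.

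For global generation, your reduction to the surjectivity of $\mathbb C^N\otimes H^0(L)\to H^0(L\otimes Q)$ for every length-$k$ torsion quotient $Q$ of $\mathcal O_C^{\oplus N}$ is the right target (it is \eqref{ss} in the paper), but the claim that this ``follows once $L$ is $(k-1)$-very ample'' is not automatic: very ampleness is exactly the $N=1$ case, and for $N\geq 2$ the quotient map mixes the summands, so the map is not a direct sum of evaluation maps on subschemes. The paper supplies the missing step by induction on $N$: split $\mathbb C^N=\mathbb C\oplus\mathbb C^{N-1}$, let $Q'\subset Q$ be the image of the first summand and $Q''=Q/Q'$, both of length at most $k$, and deduce surjectivity for $Q$ from surjectivity for $Q'$ and $Q''$ via the resulting diagram of sections, using that $(k-1)$-very ampleness implies $(\ell-1)$-very ampleness for all $\ell\leq k$. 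This is short but necessary; with it added, your nefness half is complete under $\chi(L)\geq k+g$.
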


\begin{proof} We show that $L^{[k]}$ is globally generated, hence nef. The universal sequence over $\mathsf {Quot}_C\big(\mathbb C^N, k\big)\times C$
\begin{gather*}
0\to \mathcal S\to \mathbb C^N\otimes \mathcal O\to \mathcal Q\to 0
\end{gather*}
induces, via tensorization by $L$ followed by pushforward, a morphism
\begin{gather*}
H^0\bigl(C, L^{\oplus N}\bigr)\otimes \mathcal O_{\mathsf{Quot}}\to L^{[k]}.
\end{gather*}
To prove that the morphism is surjective, we establish that
\begin{equation}\label{ss}
H^0\bigl(C, L^{\oplus N}\bigr)\to H^0(C, L\otimes Q)
\end{equation}
is surjective for all length $k$ punctual quotients $Q$ of the rank $N$ trivial bundle.

The argument only requires $L$ to be $(k-1)$-very ample, which is certainly true for us. In~fact, $L$ is $(k-1)$-very ample whenever $\deg L\geq 2g-1+k\iff \chi(L)\geq g+k$, as we remarked before the proof of Proposition~\ref{tt3}. Surjectivity of~\eqref{ss} for $N=1$ is a rephrasing of $(k-1)$-very ampleness.

For the general case, we induct on $N$. We pick a splitting $\mathbb C^N=\mathbb C\oplus \mathbb C^{N-1}$, and form the diagram with exact rows and columns:
\begin{center}
$\xymatrix{
& \ar[d] 0 & \ar[d]0 & \ar[d]0 &\\
0\ar[r] & S' \ar[r]\ar[d] & S\ar[r]\ar[d] & S''\ar[r]\ar[d] & 0 \\
0\ar[r] & \mathcal O_C \ar[r]\ar[d]& \mathbb C^{N}\otimes \mathcal O_{C}\ar[r]\ar[d] &\mathbb C^{N-1}\otimes \mathcal O_{C} \ar[r]\ar[d]& 0 \\
0\ar[r] & Q' \ar[r]\ar[d] & Q\ar[r]\ar[d] & Q''\ar[r]\ar[d] & 0\\
& 0 & 0 & 0 & }$
\end{center}
Tensoring with $L$ and taking global sections yields
\begin{center}
$\xymatrix{
 0\ar[r] & \ar[r]\ar[d]H^0(C, L)& \ar[r]\ar[d]H^0\big(C, L^{\oplus N}\big) & \ar[r] \ar[d]H^0\big(C, L^{\oplus (N-1)}\big)\ar[d] &0\\
0 \ar[r]& H^0(C, L\otimes Q')\ar[r]\ar[d] & H^0(C, L\otimes Q)\ar[r]\ & H^0(C, L\otimes Q'')\ar[r] \ar[d] & 0 \\
 & 0 & & 0 & }$
\end{center}

Both $Q'$, $Q''$ have lengths less or equal to $k$, the length of $Q$. Since $L$ is $(k-1)$-very ample, it is also $(\ell-1)$-very ample for all $1\leq \ell\leq k$, in particular when $\ell$ equals the length of $Q'$ or~$Q''$. Thus, the first and last vertical arrows are surjective by induction. This implies that the middle vertical arrow is surjective as well.

It remains to show that $L^{[k]}$ is big. By \eqref{pos}, it suffices to determine the sign of the top Segre class of $L^{[k]}$. No additional calculation is needed in this case. Indeed, the Segre integrals were noted in \cite[Corollary~10]{OP2021} to satisfy the symmetry
\begin{gather*}
(-1)^{Nk} \int_{{\mathsf{Quot}}_C(\mathbb C^N, k)} s\big(L^{[k]}\big)=(-1)^k\int_{C^{[k]}} s\bigl(\big(L^{\oplus N}\big)^{[k]}\bigr).
\end{gather*}
In the proof of Proposition \ref{tt3}, the latter integral was shown to be positive if $N\chi (L)\geq (N+1)k$, which is true by hypothesis.
\end{proof}

\subsection*{Acknowledgements}

We are grateful to G.~Bini, S.~Boissi\`ere, F.~Flamini for correspondence related to~\cite{BBF}; their paper served as motivation for this work. We thank A.~Marian and R.~Pandharipande for collaboration that led to \cite{MOP2, MOP1, MOP, OP2021}. We thank the referees for their careful reading of the manuscript and for their comments. The author is supported by NSF grant DMS1802228.

\pdfbookmark[1]{References}{ref}
\LastPageEnding

\end{document}